\documentclass[reqno,11pt]{amsart}
 
 \usepackage{mathtools}

\newtheorem{theorem}{Theorem}[section]
\newtheorem{lemma}[theorem]{Lemma}
\newtheorem{corollary}[theorem]{Corollary}

\theoremstyle{definition}

\theoremstyle{remark}
\newtheorem{remark}[theorem]{Remark}

 \makeatletter 
 \def\dashint{\operatorname{\,\,\,\mathclap{\int} \kern-.23em\text{\bf--}\!\!}}
 \makeatother

\def\dashnorm{\,\,\text{\bf--}\kern-.5em\|}
\def\ninf{\qopname\relax\@empty{inf\phantom{p}\!\!\!}}

 \makeatother

\newcommand\sfa{{\sf a}}

\newcommand\bC{\mathbb{C}}

\newcommand\bR{\mathbb{R}}
\newcommand\bS{\mathbb{S}}

\newcommand\bZ{\mathbb{Z}}

\newcommand\cF{\mathcal{F}}

\newcommand \cL{\mathcal{L}}

\newcommand\frA{\mathfrak{A}}
\newcommand\frF{\mathfrak{F}}

\newcommand{\loc}{{\rm loc}\,}

 \newcommand{\mysection}[1]{\section{#1}
 \setcounter{equation}{0}}

\begin{document}

\title[On parabolic Adams theorem]
{On the parabolic Adams theorem and
its applications to diffusion processes}
\author{N.V. Krylov}

\email{nkrylov@umn.edu}
\address{School of Mathematics, University of Minnesota, Minneapolis, MN, 55455}
 
\keywords{Adams theorem, 
derivatives of solutions of It\^o equations, strong solutions, singular drift}
 
\subjclass{60H10, 60J60}

\begin{abstract} 
We show how the parabolic version of the Adams theorem and its corollary can be used to
estimate in $L_{p}$ the evolution family
associated to a divergence form second-order parabolic 
operator
with parabolic Morrey lower-order terms and also how to 
estimate the moments of the derivatives 
of solutions of It\^o equations
with respect to the initial data when
the drift term has singularities.
\end{abstract}

\maketitle

\mysection{Introduction}

This article is devoted to two issues that
arose in connection with the It\^o stochastic 
equation
\begin{equation}
                        \label{1.30.1}
x_{s}=x+\int_{0}^{s}\sigma(r,x_{r})\,dw_{r}
+\int_{0}^{s}b(r,x_{r})\,dr
\end{equation}
with deterministic $\sigma,b$,
 bounded uniformly nondegenerate $a:=\sigma
\sigma^{*}$ and $b$ from a Morrey class
allowing $b$ satisfying  $|b|\leq\text{const}/|x|$. We deal with $b$ 
which may have singularities in $(t,x)$ sending the readers
to \cite{GG_25} if they are
  interested in cases when  
   $\sigma$ is the unit matrix and
  $b$ is H\"older
continuous in $x$ and is only summable in $t$ to the power $q\in(1,2]$
(as the Hurst exponent is $1/2$)
for a very deep research
of such situations.  The authors of 
\cite{GG_25} also show that one can allow $b$ to be some kind
 of distribution in $x$, but then the equation
is understood in a special way. 

The first issue is the weak uniqueness of
solutions of \eqref{1.30.1}, that is the problem is: are there several solutions
with distinct finite-dimensional distributions?

If we are interested in $\sigma,b$ with low regularity, the best known way to approach
this problem is to use It\^o's formula and
  appropriate existence theorems for differential
equations related to the operator
\begin{equation}
                        \label{1.30.2}
\partial u_{t}+(1/2)a^{ij}D_{ij}u+b^{i}D_{i}u.
\end{equation}

The most general results when $\sigma^{ij}=\delta^{ij}$ and $b$ is in a Morrey class 
(in $(t,x)$) belong to D. Kinzebulatov, see
\cite{Ki_25},  where one finds not only the
necessary PDE results for   operator \eqref{1.30.2} but also existence and weak uniqueness
theorems for equation \eqref{1.30.1}. In case
$\sigma$ has some mild regularity and $b$
is in a Morrey class similar PDE results for
 operator \eqref{1.30.2} are proved in
\cite{Kr_25} without relating them to the problem of weak uniqueness. In the present
article we assume that $a$ is only measurable
but operator \eqref{1.30.2} is given in the divergence form \eqref{1.30.3} and prove what can serve as the key estimate for the PDE results
in \cite{Ki_25} and \cite{Kr_25}.
In the proof of our estimate we use integration
by parts with {\em repeated\/} application
of the parabolic version of the Adams theorem
and its corollary (see Corollary \ref{corollary 10.5,1}). Before, such estimates were obtained
also by integration by parts but with $b$ being
in a Morrey class as a function of $x$
for each $t$.

The second issue related to \eqref{1.30.1}
is the existence of strong solutions, that
is solutions which are functionals of only the trajectory of $w_{t}$. It is shown in
\cite{Kr_25_1} that, somewhat roughly speaking,
 to prove that a given
solution is strong it suffices to include
it into a family of solutions defined for
each $x$ and a family should be such that
its first-order derivatives in $x$ admit
 estimates of its moments.
We show how to obtain such estimates, this
time assuming that $D\sigma$ and $b$ are
in a Morrey class. 
It is done again by integrating by parts
and using the Adams theorem
and its corollary. 
Integration by parts used to be almost
indispensable tool in the past but,
for instance, it was also
used in Section 2.7 of the recent seminal paper
\cite{BFGM_19} and, as our results show, it still remains to be a powerful
tool for obtaining a priori estimates especialy when aided by
the Adans theorem.
Thus, we have a unifying
technique and unifying equation \eqref{1.30.1}.

In the future our results will
allow us to extend the results of 
\cite{Kr_25_2} and \cite{KM_24} on the existence of strong solutions when $D\sigma,b$ for each $t$
are of Morrey class over the case that
$D\sigma,b$  
are of Morrey class with respect to $(t,x)$.

We present our results only for smooth coefficients in order not to obscure
the main ideas with irrelevant details
and leaving it to the interested reader to use approximations and extend them
  to whatever is possible.

In conclusion we introduce some notation.
We let $\bR^{d}$, $d\geq2$, be a Euclidean space
of points $x=(x^{1},...,x^{d})$, $\bR^{d+1}
=\{(t,x):t\in\bR,x\in\bR^{d}\}$. Define $B_{r}=\{x\in\bR^{d}:|x|<r\}$, $B_{r}(x)
=x+B_{r}$,
$$
C_{r}(t,x)=[t,t+r^{2})\times B_{r}(x),
$$
and let $\bC_{r}$ be the collection of $C_{r}(t,x)$. If $C\in\bC_{r}$ by $|C|$ we mean
its Lebesgue measure and for appropriate $f$
and $p\geq1$ we set
$$
\dashint_{C}f\,dxdt=\frac{1}{|C|}
\int_{C}f\,dxdt,\quad \dashnorm f\|^{p}_{L_{p}(C)}=\dashint_{C}|f|^{p}\,dxdt.
$$

For functions $u$ depending on $t,x,\eta$  $(\eta\in\bR^{d})$ set
$$
D_{i}u=u_{x^{i}}=\frac{\partial u}{\partial x^{i}},\quad u_{\eta^{i}}=\frac{\partial u}{\partial \eta^{i}},\quad u_{\eta}=(u_{\eta^{i}}),\quad Du=u_{x}=(D_{i}u),
$$
$$
u_{(\eta)}=\eta^{i}D_{i}u,\quad \partial_{t}u
=\frac{\partial u}{\partial t}.
$$
By $\bS_{\delta}$ we denote the set of symmetric $d\times d$ matrices whose
eigenvalues are in $[\delta,\delta^{-1}]$,
where $\delta\in(0,1]$ is a fixed number.

\mysection{Main results
and discussion} 

We suppose that on $\bR^{d+1}$
we are given an $\bS_{\delta}$-valued  
function $a$ and  $\bR^{d}$-values functions $\sfa,b $. Assume that these functions
belong to $B^{0,\infty}$ which is defined  as the set of 
bounded functions $f(t,x)$ on $\bR^{d+1}$ such that
they are Borel in $t$ and for each $t$ are infinitely differentiable with respect to $x$
with each derivative being a bounded function
on $\bR^{d+1}$.
Set
\begin{equation}
                        \label{1.30.3}
\cL u=\partial_{t}u+(1/2)D_{i}\big(a^{ij}D_{j}u+\sfa^{i}u\big)+b^{i} D_{i}u.
\end{equation}

Fix some  
$$
\rho_{0} \in(0,\infty),\quad p_{0}   \in(2,2+d] .  
$$
Introduce (Morrey norms)
$$
\hat { \sfa }_{p_{0} ,\rho  }=\sup_{r\leq\rho  }r
\sup_{C\in \bC_{r}} 
\dashnorm \sfa \|_{L_{p_{0}}(C)},\quad
\hat b_{p_{0} , \rho  }=\sup_{r\leq\rho  }r
\sup_{C\in \bC_{r}} 
\dashnorm b \|_{L_{p_{0}}(C)}.
$$
 
Our first result consists of the following
in which $f\in C^{\infty}_{0}(\bR^{d})$, $T>0$, and   $u(t,x)$ is the classical solution of
\begin{equation}
                          \label{11.26,5}
\cL u=0\quad\text{in}\quad [0,T]\times \bR^{d}
\end{equation}
with boundary condition $u(T,x)=f(x)$. 
Since all the data are sufficiently regular
the existence of $u$ is a classical result.
\begin{theorem}
                        \label{theorem 11.26,3}
  
Let $n\in\{iI_{i=1}+2iI_{i\geq2},i=1,2,...\}, \lambda\geq 0$.
Then there
are constants $\hat{\sfa},\hat b\in(0,1)$,
depending only on $d,\delta,p_{0}$,  
$n $,
such that, if $\hat { \sfa }_{p_{0},\rho_{0}}\leq 
e^{- \lambda\rho_{0}}\hat{\sfa}$, $\hat b_{p_{0} , \rho_{0} }
\leq e^{- \lambda\rho_{0}}\hat b$, then
\begin{equation}
                           \label{1.31.3}
\int_{\bR^{d}}|u(0,x)|^{2n}e^{-\lambda|x|}\,dx
\leq Ne^{ \alpha T}\int_{\bR^{d}}|f(x)|^{2n}e^{-\lambda|x|}\,dx,
\end{equation}
$$
\int_{[0,T]\times \bR^{d}} u  ^{2n-2}
|Du |^{2}e^{-\lambda|x|}\,dxdt\leq N e^{\lambda
\rho_{0}+\alpha T}
 \int_{\bR^{d}}|f(x)|^{2n}e^{-\lambda|x|}\,dx,
$$
where 
$$
\alpha=N\rho_{0}^{-2}e^{ \lambda\rho_{0}}
$$
and the constants called $N$ depend  only on $d,\delta,p_{0}$,  
$n $.
\end{theorem}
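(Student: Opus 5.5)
We multiply the equation $\cL u=0$ by $u^{2n-1}\xi$, where $\xi(x)=e^{-\lambda|x|}$ (smoothed near the origin if needed), and integrate over $[t,T]\times\bR^{d}$. This is an energy estimate for the backward equation. The restriction on $n$ ($n=1$ or $n$ even) guarantees that $u^{2n}$ and $u^{2n-2}$ are nonnegative and the usual identities hold without absolute values. We also insert a time weight $e^{\alpha t}$, i.e.\ we work with $v=e^{\alpha t/(2n)}u$ or simply multiply by $e^{\alpha t}$, so that a term $\alpha\int u^{2n}\xi$ appears with a favorable sign.

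Integration by parts in $x$ produces the following terms:
\begin{itemize}
\item the main positive term $\frac{2n-1}{2}\int a^{ij}D_iu D_ju\,u^{2n-2}\xi\ge c\int u^{2n-2}|Du|^2\xi$;
\item a term with $D\xi$, bounded by $\lambda\int |Du|\,|u|^{2n-1}\xi$, which uses $|D\xi|\le\lambda\xi$;
\item the $\sfa$-terms $\int \sfa^i u\,D_i(u^{2n-1}\xi)$, which are of the form $\int |\sfa|(|u|^{2n-1}|Du|+\lambda |u|^{2n})\xi$;
\item the $b$-term $\int b^iD_iu\,u^{2n-1}\xi$.
\end{itemize}
All terms with $\sfa,b$ are bounded via Cauchy--Schwarz by $\varepsilon\int u^{2n-2}|Du|^2\xi+N\int |c|^{2}u^{2n}\xi$, with $c\in\{\sfa,b\}$ (and also $\lambda\int|\sfa|u^{2n}\xi$). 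Setting $w=|u|^{n}\sqrt{\xi}$, we have $|Dw|^2\le N(u^{2n-2}|Du|^2+\lambda^2u^{2n})\xi$. It therefore suffices to control $\int_{[0,T]\times\bR^d}|c|^2w^2\,dxdt$ by $\varepsilon\int|Dw|^2+N(\varepsilon)\rho_0^{-2}\int w^2$, plus the analogous bound for the linear term.

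That control is the key step, and it is exactly what the parabolic Adams theorem and its corollary (Corollary~\ref{corollary 10.5,1}) give. If $c$ lies in the Morrey class with $\hat c_{p_0,\rho}$ small, then
\[
\int|c|^2w^2\le N\hat c^{2}_{p_0,\rho}\Big(\int|Dw|^2+\rho^{-2}\int w^2+\text{(a time-derivative part)}\Big).
\]
The main obstacle is that, unlike the case of $x$-Morrey coefficients, the parabolic Adams inequality needs control of $\partial_t w$ in a negative norm ($\mathbb{H}^{-1}$-type), not just $Dw$ in $L_2$. I would obtain that control from the equation itself. Indeed, $\partial_t(u^{n})=nu^{n-1}\partial_tu$ is expressed through the divergence-form operator as $D(\cdot)$ plus lower-order terms involving $b,\sfa$. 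This is where the "repeated" application comes in: the lower-order terms in $\partial_t w$ are again of the form $c\,u^{n-1}Du$ or $c\,u^n$, and they are estimated once more by Adams and the Corollary.

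The weight $\xi$ is localized to cylinders of size $\rho_0$. On such cylinders $\xi$ varies by at most a factor $e^{\lambda\rho_0}$, which explains the hypothesis $\hat c\le e^{-\lambda\rho_0}\hat{\sfa}$, $\hat b$ and the factor $e^{\lambda\rho_0}$ in $\alpha$. The weighted estimate is obtained from a partition of unity over cylinders $C\in\bC_{\rho_0}$, using the smallness of $\hat c$ and absorption.

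Collecting the estimates gives
\[
\sup_t\int u^{2n}(t)\xi+\int\!\!\int u^{2n-2}|Du|^2\xi\le\int f^{2n}\xi+N\rho_0^{-2}e^{\lambda\rho_0}\int_0^T\!\!\int u^{2n}\xi .
\]
(Here the $\lambda^2$ terms are absorbed since $\lambda^2\le N\rho_0^{-2}e^{\lambda\rho_0}$.) Gronwall's inequality in $t$ then yields the bound \eqref{1.31.3} with $\alpha=N\rho_0^{-2}e^{\lambda\rho_0}$. The gradient bound follows from the same inequality after inserting this estimate, with the stated factor $e^{\lambda\rho_0+\alpha T}$ absorbing $\alpha T\le e^{\alpha T}$.
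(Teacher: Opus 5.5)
\textbf{There is a genuine gap for $n\ge2$.} Your architecture matches the paper's: a local energy estimate, then control of $\int|c|^2u^{2n}$ by representing a localized $u^n$ through $P_{2,4}$ with $\partial_t$ eliminated via the equation, repeated use of Theorem~\ref{theorem 5.25,1} and Corollary~\ref{corollary 10.5,1} with absorption, averaging over cylinders of size $\rho_0$, and Gronwall. The gap is in your claim that the lower-order terms in $\partial_t w$ are of the form $c\,u^{n-1}Du$ or $c\,u^n$. Since
\[
nu^{n-1}D_i(a^{ij}D_ju)=D_i\big(a^{ij}D_j(u^n)\big)-n(n-1)u^{n-2}a^{ij}D_iuD_ju ,
\]
the equation for $v=u^n$ contains the quadratic term $2(1/n-1)a^{ij}D_i(u^{n/2})D_j(u^{n/2})$. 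This is merely an $L_1$ function. It is neither the divergence of an $L_2$ field nor a Morrey coefficient times an $L_2$ function. So $P_{2,4}$ of it cannot be fed into Adams' theorem or its corollary: these work in $L_q$, $q>1$, and $P_{1,8}$ does not map $L_1$ into $L_2$. As written, your scheme closes only for $n=1$.

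\textbf{The missing idea is a sign argument.} That quadratic term enters $(\partial_t+\Delta)(\zeta v)$ with a nonnegative sign. Dropping it in the It\^o/Duhamel representation gives only the one-sided bound
\[
\zeta v\le \hat T_{T-t}[\zeta(T,\cdot)f^n]+N(d)P_{2,4}F ,
\]
where $F$ contains only divergence and lower-order terms. This controls $|\zeta v|$ only because $v=u^n\ge0$. That is the real reason $n$ is restricted to $1$ (where the term vanishes) or to even values. It is not, as you state, to make $u^{2n}$ and $u^{2n-2}$ nonnegative, which holds for every integer $n$.

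\textbf{A smaller omission: the terminal data.} The terminal value enters through $\hat T_{T-t}[\zeta(T,\cdot)f^n]$. Bounding $\int|c|^2I_C\big(\hat T_{T-t}[\zeta(T,\cdot)f^n]\big)^2$ by $\int\zeta^2(T,\cdot)f^{2n}$ needs the separate trace estimate of Lemma~\ref{lemma 11.29,1}. Your ``key step'', stated in terms of $\int|Dw|^2$ and $\int w^2$ only, leaves no room for this contribution.
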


The main emphasis of this result is
on the fact that $N$ is independent of
any smoothness or other characteristics of
$a,\sfa,b$. By using approximations
this allows us to build a solvability theory
(weak solutions),
for instance, 
for equation   with
$|b(x)|\leq c (|x^{1}|+|x^{2}|+|x^{3}|)^{-1}$ which is not
in $L_{3,\loc}$ for any $d\geq3$.

\begin{remark}
                        \label{remark 1.31.2}
In \cite{BFGM_19} the authors were more interested in the estimates like 
\begin{equation}
                           \label{1.31.5}
\int_{\bR^{d}}(1+|x|)^{s}|u(0,x)|^{2n} \,dx
\leq N \int_{\bR^{d}}(1+|x|)^{s}|f(x)|^{2n} \,dx
\end{equation}
for $s\in\bR$ rather than \eqref{1.31.3}.
Actually, \eqref{1.31.5} follows from
\eqref{1.31.3} with $\lambda=1$. Indeed, obviously
$$
\int_{\bR^{d}}(1+|y|)^{s}\int_{\bR^{d}}
|u(0,x)|^{2n}e^{-|x-y|}\,dxdy
$$
$$
\leq N\int_{\bR^{d}}(1+|y|)^{s}\int_{\bR^{d}}
|f(x)|^{2n}e^{-|x-y|}\,dxdy,
$$
which yields \eqref{1.31.5} after using the Fubini theorem and the simple observation that
$$
\int_{\bR^{d}}(1+|y|)^{s}e^{-|x-y|}\,dy
\sim N(1+|x|)^{s}.
$$
\end{remark}
 
To state our second result take an integer $d_{1}\geq d$, 
  a   $d\times d_{1}$-matrix valued 
$\sigma=\sigma(t,x)$ defined on $\bR^{d+1}$
and  an 
 $\bR^{d}$-valued function $b=b(t,x)$   on $\bR^{d+1}$. We assume that $\sigma,b$ are
of class $B^{0,\infty}$ and $a:=\sigma
\sigma^{*}$ is $\bS_{\delta}$-valued. Then let
$w_{t}$ be a $d_{1}$-dimensional Wiener
process on a complete probability
space $(\Omega,\cF,P)$. 

Take $x,\eta\in \bR^{d}$ ,
$t\in\bR$
and consider the following system   
\begin{equation}
                                                        \label{6.20.3}
x_{s}=x+\int_{0}^{s}\sigma (t+r,x_{r})\,dw _{r}+
\int_{0}^{s}b(t+r,x_{r})\,dr,
\end{equation}
\begin{equation}  
                                                        \label{6.20.4}
\eta_{s}=\eta+\int_{0}^{s}\sigma _{(\eta_{r})}(t+r,x_{r})\,dw _{r}
+\int_{0}^{s}b_{(\eta_{r})}(t+r,x_{r})\,dr ,
\end{equation}
where $f_{(\eta)}(t,x)=\eta^{i}D_{i}f(t,x)$.
 
As is well known, \eqref{6.20.3} 
 has a unique solution which we denote by $x_{s}(t,x)$.
By substituting it into \eqref{6.20.4} we see that the coefficients
of \eqref{6.20.4} grow linearly in $\eta$ and hence 
\eqref{6.20.4} also has a unique solution which we denote by
$\eta_{s}(t,x,\eta)$. By the way, observe that equation \eqref{6.20.4}
is linear with respect to $\eta_{r}$. Therefore
$\eta_{t}(x,\eta)$ is an affine function of $\eta$.
For the uniformity of notation we set $x_{s}(t,x,\eta)=x_{s}(t,x)$.
It is also well known 
  (see, for instance, Sections 2.7 and 2.8 of
\cite{Kr_77}) that, as a function of $x$ and $(x,\eta)$, the processes
$x_{s}(t,x)$ and $\eta_{s}(t,x,\eta)$
 are infinitely differentiable in an appropriate sense
(specified below),
their derivatives satisfy the equations which are obtained by formal
differentiation of \eqref{6.20.3} and \eqref{6.20.4},
respectively, and, for any $n\geq0,T\in(0,\infty)$,  $l_{k},\xi_{k}\in\bR^{d}$,
$k=1,...,n$ (if $n\geq1$),
$x,\eta\in \bR^{d}$, $t\in\bR$, and $q\geq 1$,
\begin{equation}
                                                        \label{6.21.1}
 E\sup_{s\leq T}\Big|\Big(\prod_{k=1}^{n}(lb)D _{(l_{k},\xi_{k})}
\Big)(x_{s},\eta_{s})(t,x,\eta)
\Big|^{q}\leq N(1+|\eta|^{m}),
\end{equation}
where $N $ is a certain constant  independent of $(x,\eta)$,
$m=m(n,q)$, and,
for instance,
by $(lb)D _{(l,\xi)} \eta_{s} (t,x,\eta)$ we mean a process $\zeta_{s}$
such that, for any $q\geq1$ and $S\in(0,\infty)$,
$$
\lim_{\varepsilon\downarrow0}
E\sup_{s\leq S}\big|\zeta_{s}-\varepsilon^{-1}
\big(\eta_{s} (t,x+\varepsilon l,\eta+\varepsilon\xi)
-\eta_{s} (t,x,\eta)\big)\big|^{q}=0.
$$
 Introduce
$$
\widehat{D\sigma}_{p_{0},\rho }=\sup_{r\leq\rho  }r
\sup_{C\in \bC_{r}} 
\dashnorm D\sigma \|_{L_{p_{0}}(C)}.
$$
 
\begin{theorem}
               \label{theorem 3.7.1}
Let an integer $\kappa>(d+2)/2$. Then
  there is a constant $N_{0}$, depending only on $d,\delta,p_{0},\kappa$,
  such that if $N_{0}( \widehat{D\sigma}_{p_{0},\rho_{0}}+\hat b_{p_{0},\rho_{0}})
  \leq 1$,
then for each $(t,x)\in\bR^{d+1}$  
the process $x_{s}(t,x)$ admits a modification,
called again $x_{s}(t,x)$, such that
 with probability one $x_{s}(t,\cdot)\in W^{1}_{2\kappa,\loc}(\bR^{d})$ and
\begin{equation}
                       \label{3.7.6}
E \int_{\bR^{d}}e^{-|x|}|Dx_{s}(t,x)|^{2\kappa}\,dx
\leq N ,
\end{equation}
for any $s,|t|\le T\in(0,\infty)$,
where   $N$ depends only on $T$,
$d$, $\delta$, $p_{0}, \rho_{0} $,  $\kappa$.

Furthermore, for each $\alpha<1-(d+2)/(2\kappa)$  and $\omega$
the function $x_{s}(0,x)$
is $\alpha$-H\"older continuous with respect to $x$ and $(\alpha/2)$-H\"older 
continuous with respect to $s$ on each set $[0,T]\times \bar B_{R}$,
$T,R\in(0,\infty)$. 

\end{theorem}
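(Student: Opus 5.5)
We would reduce \eqref{3.7.6} to an estimate of moments of $\eta_{s}(t,x,\eta)$, since $Dx_{s}(t,x)\eta=\eta_{s}(t,x,\eta)$ for the smooth coefficients considered. Write $|Dx_{s}|^{2\kappa}\le N\sum_{k}|\eta_{s}(t,x,e_{k})|^{2\kappa}$. Then it suffices to bound
$$
E\int_{\bR^{d}}e^{-|x|}|\eta_{s}(t,x,\eta)|^{2\kappa}\,dx
$$
for unit $\eta$, with a constant independent of the smoothness of $\sigma,b$. The pair $(x_{s},\eta_{s})$ is a diffusion in $\bR^{2d}$, and its generator acting on functions of $(x,\eta)$ is
$$
\partial_{t}+\tfrac12 a^{ij}D_{ij}+b^{i}D_{i}+\sigma^{ik}\sigma^{jk}_{(\eta)}D_{i}\partial_{\eta^{j}}
+\tfrac12\sigma^{ik}_{(\eta)}\sigma^{jk}_{(\eta)}\partial_{\eta^{i}\eta^{j}}+b^{i}_{(\eta)}\partial_{\eta^{i}}.
$$
Apply It\^o's formula to $g(\eta_{s})=|\eta_{s}|^{2\kappa}$. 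This gives $dE|\eta_{s}|^{2\kappa}$ in terms of $E\,h(t+s,x_{s})|\eta_{s}|^{2\kappa}$, where $h\le N(|D\sigma|^{2}+|Db|)$. Thus the task becomes controlling a Feynman--Kac type weight $\exp\big(\int_{0}^{s}N(|D\sigma|^{2}+|Db|)(t+r,x_{r})\,dr\big)$, and $Db$ is not controlled by the hypotheses. So we work instead with a backward PDE, as in Theorem \ref{theorem 11.26,3}.

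Concretely, for $f\in C^{\infty}_{0}(\bR^{d})$ consider the expectation $v(t,x)=E f(x_{T-t}(t,x))$. This solves the nondivergence equation $\partial_{t}v+\frac12 a^{ij}D_{ij}v+b^{i}D_{i}v=0$, and we can rewrite it in divergence form as \eqref{1.30.3} with $\sfa^{i}=-D_{j}a^{ij}$ (up to adding the term $\sfa$ to the drift $b$). Note that $\hat{\sfa}_{p_{0},\rho_{0}}\le N\widehat{D\sigma}_{p_{0},\rho_{0}}$, so Theorem \ref{theorem 11.26,3} applies with $\lambda=1$. To get at derivatives, we differentiate the equation in $x$ in a direction $\eta$. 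The function $w=v_{(\eta)}$ satisfies an equation of the same type with extra free terms $D_{i}(a^{ij}_{(\eta)}D_{j}v)$ and $b_{(\eta)}^{i}D_{i}v$. The latter must be integrated by parts: write it as $D_{i}(b^{i}v_{(\eta)})$ minus $(\operatorname{div} b)v_{(\eta)}$, and handle the divergence by moving it onto a test function. Alternatively, and better, work directly on the process level with the function
$$
\Phi(s,x,\eta)=E\,|\eta_{T-s}(s,x,\eta)|^{2}\,\phi(x_{T-s}),
$$
where $\phi$ is a weight. The aim is a closed system of inequalities for $\int e^{-|x|}E|\eta_{s}|^{2\kappa}\,dx$. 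In this system all appearances of $D\sigma,b$ are paired with $u^{2n-2}|Du|^{2}$-type quantities, and these are estimated via the parabolic Adams theorem (Corollary \ref{corollary 10.5,1}) in the form
$$
\int |h|\,|\psi|^{2}\,dxdt\le N\hat h\int(|D\psi|^{2}+\rho_{0}^{-2}|\psi|^{2})\,dxdt .
$$
The smallness $N_{0}(\widehat{D\sigma}+\hat b)\le 1$ then lets us absorb these terms, and Gronwall gives $e^{\alpha T}$ growth.

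The main obstacle is the term $b_{(\eta)}$ in \eqref{6.20.4}, because $Db$ carries no Morrey bound. We would first try to remove it by the standard trick of passing to the adjoint/divergence formulation. Integrate over $x$ with weight $e^{-|x|}$ against the density of $x_{s}$, and use the Jacobian identity $\partial_{s}\det Dx_{s}\sim(\operatorname{div}b)\det Dx_{s}\,ds+\ldots$ together with integration by parts in $x$. This converts $Db$ into $b\,D(\text{moment of }\eta)$, which is then handled by the Adams estimate plus the gradient bound of Theorem \ref{theorem 11.26,3} with $n=\kappa$ (admissible since $n$ ranges over even integers $\ge2$ and $1$). Having \eqref{3.7.6}, i.e.\ $x_{s}(t,\cdot)\in W^{1}_{2\kappa,\loc}$ in mean, the Hölder statements follow from the Kolmogorov continuity criterion. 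The spatial part uses Sobolev embedding, valid since $2\kappa>d+2>d$. The time part uses the moment bound $E|x_{s}-x_{r}|^{2\kappa}\le N|s-r|^{\kappa}$, which requires Krylov-type estimates for $\int b(x_{r})\,dr$ under the Morrey condition. Combining these on $[0,T]\times\bar B_{R}$ with the anisotropic Kolmogorov theorem in $d+2$ effective dimensions gives exponent $\alpha<1-(d+2)/(2\kappa)$.
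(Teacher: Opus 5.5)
Your H\"older part (Morrey/Sobolev embedding in $x$, moment bounds in $s$ from Krylov-type estimates, an anisotropic Kolmogorov lemma in effective dimension $d+2$) matches the paper. The argument for \eqref{3.7.6}, however, has a genuine gap.

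The key inequality you rely on, $\int|h|\,|\psi|^{2}\,dxdt\le N\hat h\int(|D\psi|^{2}+\rho_{0}^{-2}|\psi|^{2})\,dxdt$ for arbitrary $\psi$, does not follow from Corollary \ref{corollary 10.5,1}. It is in fact false for $(t,x)$-Morrey coefficients. Its right-hand side has no time derivative, so it amounts to the elliptic Fefferman--Phong inequality on each time slice with a uniform constant. That requires $b(t,\cdot)$ to be spatially Morrey uniformly in $t$, which is exactly the older setting the paper wants to go beyond. For example, $b=\varepsilon\min(|t|^{-\beta},M)$ with $\beta<1/p_{0}$ has $\hat b_{p_{0},\rho_{0}}\le N\varepsilon\rho_{0}^{1-2\beta}$ uniformly in $M$. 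Yet $h=|b|^{2}$ and $\psi=\chi(t)\phi(x)$, with $\chi$ concentrating at $t=0$, violate the inequality as $M\to\infty$. The paper gets the missing time regularity from the equation itself. For $w=u^{n}$ it writes $0\le\zeta w\le h+P_{2,4}G$ via It\^o's formula, dropping the nonpositive term $2\zeta(1/n-1)a^{ij}D_{i}(u^{n/2})D_{j}(u^{n/2})$. It then splits $P_{2,4}=NP_{1,8}P_{1,8}$ and, for each piece of $G$, applies Theorem \ref{theorem 5.25,1} to one factor and Corollary \ref{corollary 10.5,1} to the other. The terminal term is handled by Lemma \ref{lemma 11.29,1}. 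Finally it absorbs the resulting $N_{1}(\hat b^{2}_{p_{0},\rho_{0}}+\widehat{D\sigma}^{2}_{p_{0},\rho_{0}})\int\zeta^{2}(|b|^{2}+|D\sigma|^{2})u^{2n}$.

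Your treatment of $b_{(\eta)}$ also does not work as stated. The Jacobian identity for $\det Dx_{s}$ reintroduces $\operatorname{div}b$. Theorem \ref{theorem 11.26,3} cannot supply gradient bounds for moments of $\eta_{s}$. Those solve the joint equation $\partial_{t}u+\check{\cL}u=0$ in $(x,\eta)$, which has the cross terms $\sigma^{ik}\sigma^{jk}_{(\eta)}u_{x^{i}\eta^{j}}$ and $\sigma^{ik}_{(\eta)}\sigma^{jk}_{(\eta)}u_{\eta^{i}\eta^{j}}$; they do not solve \eqref{11.26,5}. Besides, odd $\kappa$ is not an admissible $n$ there. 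Your differentiation of $v=Ef(x_{T-t})$ only controls the first moment $E(Df(x_{s}),\eta_{s})$.

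The missing idea is the following.
\begin{itemize}
\item Take $u(t,x,\eta)=EF(x_{T-t},\eta_{T-t})$ with $F(x,\eta)=|(Df(x),\eta)|^{2\kappa}$. This is a polynomial in $\eta$ because $\eta_{s}$ is linear in $\eta$.
\item Multiply $\partial_{t}u+\check{\cL}u=0$ by $\zeta^{2}u^{2n-1}$ and integrate by parts in $x$ only, with $\eta$ as a parameter. In $\zeta^{2}u^{2n-1}\eta^{j}(D_{j}b^{i})u_{\eta^{i}}$ this moves $D_{j}$ off $b^{i}$.
\item The resulting terms with $u_{\eta}$, $u_{x\eta}$, $u_{\eta\eta}$ are controlled by integrating over $\eta\in B_{1}$ and using Lemma \ref{lemma 12.8.1}. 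For instance, $\int_{B_{1}}u^{2n-2}|u_{x\eta}|^{2}d\eta\le N\int_{B_{1}}u^{2n-2}|Du|^{2}d\eta$.
\item The same device handles the $J_{7}$ term in the Adams step.
\end{itemize}
This is Theorem \ref{theorem 6.21.1}. With $n=1$ it bounds $\int e^{-|x|}\sup_{|\eta|\le1}v^{2}\,dx$. H\"older's inequality, plus Fatou's lemma to reach $f(x)=x$, then gives \eqref{3.7.6}.
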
  

\begin{remark}
                          \label{remark 1.31.1}
There are not many papers where estimates
of $|Dx_{s}(t,x)|$ are derived in the situation
when the drift term may have singularities.
Still one of them is standing out with a remarkable result on the existence of strong
solutions. This is the paper
\cite{RZ_25} of M. R\"ockner and G. Zhao.
Their estimate stated in Proposition 4.1 of
\cite{RZ_25} is in some aspects stronger than
\eqref{3.7.6} but in some other aspects is weaker.
The essence is nevertheless the same
setting aside the fact that $a^{ij}=\delta^{ij}$ in \cite{RZ_25}. 
However, the assumptions on $b$  of that
proposition are way stronger than ours.
For instance, for $d\geq 3$ the function $b$
such that
$|b|=c(|x^{1}|+|x^{2}|+|x^{3}|)^{-1}$, $c>0$, does not
satisfy the conditions in \cite{RZ_25} and it does satisfy 
the condition in Theorem \ref{theorem 3.7.1} if $c$
is small enough and, say $p_{0}=5/2$.

In \cite{KM_24} the authors made the next
substantial step forward in case $a^{ij}=\delta^{ij}$. They 
follow very closely the probabilistic part of \cite{RZ_25}, 
but make changes in the PDE part of \cite{RZ_25}, to include form-bounded $b$,
 which
is more general than in \cite{RZ_25},
  however this prevents $b$ from having $L_{2}$ singularities in $t$. The latter shortcoming
is promised to be eliminated in a subsequent paper.
\end{remark}

Our third main result is the following
conjecture which allows $b$ to have 
rather strong singularities in $t$
unlike \cite{KM_24}. In this conjecture
we suppose a priori that $\sigma,b$ are only
Borel measurable  and $a=\sigma\sigma^{*}$
is $\bS_{\delta}$-valued. Other
 needed assumptions 
on $\sigma,b$ are included in the statement
of the conjecture. The proof of this conjecture
we have in mind is based on Theorem \ref{theorem 6.21.1}, which allows us to,
basically, reproduce the long scheme
of arguments 
resulting in \cite{Kr_25_2}.
Theorem \ref{theorem 6.21.1}  is also the main tool
in proving Theorem \ref{theorem 3.7.1}.

{\em Conjecture\/}. Suppose that $D\sigma\in L_{1,\loc}(\bR^{d+1})$ and $b=b_{M}+b_{B}$ 
(Morrey part plus bounded part), where both summands are Borel, and introduce
$$
\hat b_{M,p_{0} , \rho  }=\sup_{r\leq\rho  }r
\sup_{C\in \bC_{r}} 
\dashnorm b_{M} \|_{L_{p_{0}}(C)},
\quad
\bar b_{B}(t)=\sup_{x\in\bR^{d}}|b_{B}(t,x)|.
$$

 The conjecture is that there is a constant $N_{0}=N_{0}(d, \delta,
p_{0})$   such that if
$$
N_{0}(\widehat{D\sigma}_{p_{0},\rho_{0}}+\hat b_{M,p_{0},\rho_{0}})\leq1 ,\quad \int_{\bR}\bar b^{2}_{B}(t)\,dt<\infty,
$$
then equation \eqref{6.20.3} has a solution,
which is $\{\cF^{w}_{t}\}$-adapted, where
$\cF^{w}_{t}$ is the completion of
$\sigma\{w_{s},s\leq t\}$.

Our results are based on some properties
of the parabolic Riesz potentials
and a deep theorem of Adams on their
estimates.

\mysection{Parabolic Riesz potentials}

For $k,s,r,\alpha>0  $, and appropriate $f(t,x)$'s
on $\bR^{d+1}$
\index{$S$@Miscelenea!$p_{\alpha,k}(s,r)$}%
\index{$C$@Operators!$P_{\alpha,k}f(t,x)$}%
 define
$$
p_{\alpha,k}(s,r)=\frac{1}{s^{(d+2-\alpha)/2}}e^{-r^{2}/(ks)}I_{s>0}, 
$$
$$
P_{\alpha,k}f(t,x)=\int_{\bR^{d+1} }p_{\alpha,k}(s,|y|)f(t+s,x+y)\,dyds.
$$
$$
=\int_{t}^{\infty}\int_{\bR^{d} }p_{\alpha,k}(s-t,|y-x|)
f(s,y)\,dsdy.
$$  

\begin{theorem}
                     \label{theorem 10.5,1}
(i) There is a constant $c(d)>0$ such that
$u=c(d)P_{2,4}(\partial_{t}u+\Delta u)$
if $u\in C^{\infty}_{0}(\bR^{d+1})$.

(ii) For $\alpha,\beta,k>0$ we have
$P_{\alpha,k}P_{\beta,k}=c(\alpha,\beta,k)P_{\alpha+\beta,k}$. 

(iii) For any integer $n\geq1$, $\alpha>n$, and bounded $f$
with compact support we have $|D^{n}P_{\alpha,k}f|\leq N(d,\alpha,n)P_{\alpha-n,2\kappa}|f|$.

\end{theorem}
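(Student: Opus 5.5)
The three assertions are direct computations with Gaussian kernels, and I would prove them separately.

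For (i), I would identify $P_{2,4}$ as a multiple of the backward heat potential. The heat kernel for $\partial_t+\Delta$ run backward in time is
$$
\Gamma(s,y)=(4\pi s)^{-d/2}e^{-|y|^{2}/(4s)}I_{s>0}.
$$
With $\alpha=2$ and $k=4$ we have $p_{2,4}(s,|y|)=s^{-d/2}e^{-|y|^{2}/(4s)}I_{s>0}$, so $p_{2,4}=(4\pi)^{d/2}\Gamma$. For $u\in C^{\infty}_{0}(\bR^{d+1})$ put $g=\partial_{t}u+\Delta u$. Then
$$
v(t,x)=\int_{0}^{\infty}\int_{\bR^{d}}\Gamma(s,y)\,g(t+s,x+y)\,dyds
$$
solves $\partial_t v+\Delta v=g$, up to the sign convention, and $v\to0$ as $t\to\infty$.
\begin{itemize}
\item I would check this by writing $g(t+s,x+y)=\frac{d}{ds}u(t+s,x+y)+\Delta_{y}u(t+s,x+y)$.
\item Integrating by parts in $y$ moves $\Delta_y$ onto $\Gamma$.
\item Since $\partial_{s}\Gamma=\Delta_{y}\Gamma$, the two terms combine into $\int\frac{d}{ds}\big[\Gamma(s,y)u(t+s,x+y)\big]\,dy\,ds$.
\item Integrating this from $\varepsilon$ to $\infty$ gives $-\int\Gamma(\varepsilon,y)u(t+\varepsilon,x+y)\,dy$, which tends to $-u(t,x)$ as $\varepsilon\downarrow0$.
\end{itemize}
So $u=-(4\pi)^{-d/2}P_{2,4}g$. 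With the paper's sign convention the constant $c(d)$ is this number, and any sign ambiguity is absorbed by flipping $t$ or into the statement as written.

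For (ii), I would compute the convolution directly. $P_{\alpha,k}P_{\beta,k}$ has kernel
$$
\int_{0}^{s}\int_{\bR^{d}}p_{\alpha,k}(s_{1},|y_{1}|)\,p_{\beta,k}(s-s_{1},|y-y_{1}|)\,dy_{1}ds_{1}.
$$
The $y_1$-integral is a convolution of Gaussians with variances proportional to $s_{1}$ and $s-s_{1}$. It equals
$$
(k\pi)^{d/2}\frac{\big(s_{1}(s-s_{1})\big)^{d/2}}{s^{d/2}}\,e^{-|y|^{2}/(ks)}.
$$
What remains is $e^{-|y|^2/(ks)}s^{-d/2}\int_{0}^{s}s_{1}^{\alpha/2-1}(s-s_{1})^{\beta/2-1}ds_{1}$. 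The substitution $s_{1}=s\theta$ turns this into $B(\alpha/2,\beta/2)\,s^{(\alpha+\beta)/2-1-d/2}e^{-|y|^{2}/(ks)}$. The exponent is $-(d+2-\alpha-\beta)/2$, so the kernel is $c\,p_{\alpha+\beta,k}$ with $c=(k\pi)^{d/2}B(\alpha/2,\beta/2)$.

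For (iii), I would differentiate under the integral, which is justified since $f$ is bounded with compact support and the derivatives of the kernel are integrable near $s=0$ when $\alpha>n$. Each $x$-derivative of $e^{-|y-x|^2/(ks)}$ produces a polynomial in $|y-x|/\sqrt s$ of degree at most $n$, multiplied by $s^{-n/2}$ and the same exponential. This gives
$$
|D^{n}_{x}p_{\alpha,k}(s,|y-x|)|\leq N s^{-n/2}\big(1+(r^{2}/s)^{n/2}\big)s^{-(d+2-\alpha)/2}e^{-r^{2}/(ks)}.
$$
I would then absorb the polynomial factor using $(r^{2}/s)^{n/2}e^{-r^{2}/(2ks)}\leq N$, which leaves $e^{-r^{2}/(2ks)}$. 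The result is $N p_{\alpha-n,2k}(s,r)$, and integrating against $|f|$ gives the claim. The constant depends on $k$ as well, so it is really $N(d,\alpha,n,k)$; the "$2\kappa$" in the statement is presumably a typo for $2k$.

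The only step needing care is the boundary-term limit in (i), namely the approximate-identity argument as $\varepsilon\downarrow0$; the rest is routine Gaussian algebra.
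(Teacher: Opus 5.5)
Your proof is correct and follows the paper's approach. Parts (ii) and (iii) are the same direct Gaussian computations, and you are right that $2\kappa$ should read $2k$ and that $N$ also depends on $k$. In (i), your integration by parts against the heat kernel is the analytic form of the paper's It\^o-formula argument for $u(t+s,x+\sqrt{2}w_{s})$. That argument gives $u(t,x)=-E\int_{0}^{\infty}(\partial_{t}u+\Delta u)(t+s,x+\sqrt{2}w_{s})\,ds$ at once, without the $\varepsilon\downarrow0$ limit.

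The one thing to fix is how you handle the sign. Your identity $u=-(4\pi)^{-d/2}P_{2,4}(\partial_{t}u+\Delta u)$ is right, so the requirement $c(d)>0$ in the statement is simply a slip. The paper itself uses $w=-cP_{2,4}(\partial_{t}w+\Delta w)$ with $c=(4\pi)^{-d/2}$ in the proof of Lemma~\ref{lemma 11.29,1}. Say this plainly rather than suggesting the sign can be absorbed by flipping $t$: that would replace $P_{2,4}$ by a backward-in-time potential, which is a different operator.
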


Proof. Assertion (i) follows from It\^o's
formula applied to $u(t,\sqrt 2 w_{t})$, where
$w_{t}$ is a $d$-dimensional Wiener process.
Assertion (ii) follows after direct computations. Assertion (iii) is also
proved by direct computations augmented
by the fact that $r^{m}e^{-r^2/\kappa}\leq
N(m,\kappa)e^{-r^2/(2\kappa)}$. \qed

To state a parabolic analog of one of  the Adams theorems, we need to introduce
the homogeneous version of
$$
\hat f_{p_{0},\rho }:=\sup_{r\leq\rho  }r
\sup_{C\in \bC_{r}} 
\dashnorm f \|_{L_{p_{0}}(C)},
$$
namely,
\begin{equation}
                             \label{2.16.5}
\|f\|_{\dot E_{p, \beta} }:=
\sup_{\rho >0,C\in\bC_{\rho}}\rho^{\beta}
\dashnorm f   \|_{ L_{p }(C)} <\infty ,
\end{equation}
where $p>1,\beta>0$. We write $f\in \dot E_{p, \beta}$ if \eqref{2.16.5} holds.
Observe that if $f\in \dot E_{p, \beta}$
and $p\beta >d+2$, then $f=0$ (a.e.).

\begin{remark}
                         \label{remark 11.30,1}
If $C\in\bC_{\rho_{0}}$, then
as is not hard to prove
$$
\|I_{C}b\|_{\dot E_{p_{0}, 1}}\leq
\hat b_{p_{0} ,\rho_{0} },\quad \|I_{C}\|
_{\dot E_{p_{0}, 1}}\leq \rho_{0}.
$$
 
\end{remark}

Here is a parabolic analog 
of one of theorems of Adams.                  
 
\begin{theorem}[Theorem 4.5 \cite{Kr_25}]
                        \label{theorem 5.25,1}
Let $ \alpha>0,1<q<p<\infty$, $k>0$, $b(t,x)\geq0$. Then for any $f(t,x)\geq0$
\begin{equation}
                            \label{5.25,1}
\|bP_{\alpha,k}f\|_{L_{q }}
\leq N\|b\|_{\dot E_{p, \alpha} }
\|f\|_{L_{q }},
\end{equation}
where $N$ depends only on $d,q ,p,\alpha,k$.
 
\end{theorem}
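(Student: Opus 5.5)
The plan is to reduce \eqref{5.25,1} to the classical Adams trace inequality, run in the parabolic setting. We regard $\bR^{d+1}$ with the parabolic quasi-distance $\varrho((t,x),(s,y))=|t-s|^{1/2}+|x-y|$ and the measure $dxdt$. Then cylinders $C\in\bC_{r}$ are, up to harmless shifts and constants, balls of radius $r$, and $|C|\sim r^{d+2}$; so this is a space of homogeneous type of dimension $d+2$.

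\textbf{Step 1: compare the kernel with the Riesz potential.} Since $e^{-|y|^{2}/(ks)}\le N(|y|^{2}/s)^{-m}$ for any $m$, we get
$$
p_{\alpha,k}(s,|y|)\le N\big(\sqrt{s}+|y|\big)^{-(d+2-\alpha)}.
$$
Hence $P_{\alpha,k}f\le N I_{\alpha}f$ for $f\ge0$, where $I_{\alpha}$ is the parabolic Riesz potential of order $\alpha$ relative to $\varrho$. It therefore suffices to prove $\|bI_{\alpha}f\|_{L_{q}}\le N\|b\|_{\dot E_{p,\alpha}}\|f\|_{L_q}$. We may assume $\alpha<d+2$; otherwise $\dot E_{p,\alpha}$ forces $b=0$ (for $p\alpha>d+2$), or the kernel has no singularity to handle.

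\textbf{Step 2: dualize and use the $L_q$-boundedness of the maximal function.} By duality the claim is equivalent to
$$
\int (I_{\alpha}g)\, b\, h\le N\|b\|_{\dot E_{p,\alpha}}\|g\|_{L_q}\|h\|_{L_{q'}},
$$
i.e. to the $L_{q'}$ bound for $I_{\alpha}(bh)$ obtained by symmetry of the kernel. Write $\mu=b^{q}\,dxdt$. I would then prove the trace inequality
$$
\int (I_{\alpha}f)^{q}\,d\mu\le N\|f\|_{L_q}^{q}
$$
via Hedberg's pointwise estimate together with a good-$\lambda$/Muckenhoupt--Wheeden type argument. Specifically, split $I_{\alpha}f(z)$ into the part over $\varrho<\delta$, which is at most $N\delta^{\alpha}Mf(z)$, and the part over $\varrho\ge\delta$, which is controlled by the fractional maximal function $M_{\alpha}f$. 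Then use the dyadic (Christ-cube) discretization
$$
I_{\alpha}f\lesssim\sum_{Q}\ell(Q)^{\alpha}\Big(\dashint_{Q}f\Big)I_{Q}.
$$

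\textbf{Step 3: estimate the dyadic sum.} This is the heart of the matter and is where $p>q$ is essential. Testing the dyadic operator against $\mu$ and using Hölder in each cube gives
$$
\mu(Q)=\int_{Q}b^{q}\le |Q|\,\Big(\dashint_{Q} b^{p}\Big)^{q/p}\le |Q|\,\ell(Q)^{-\alpha q}\|b\|^{q}_{\dot E_{p,\alpha}}.
$$
This only yields a Kerman--Sawyer type condition. To close the argument I would use the improvement coming from $p>q$. Fefferman--Phong/Chiarenza--Frasca showed that $b^{q}\in$ Morrey with exponent $p/q>1$ implies $\mu(Q)\le N\ell(Q)^{-\alpha q}|Q|$ holds with a "bump". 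This in turn gives the Sawyer testing condition
$$
\int_{Q}\big(I_{\alpha}(I_{Q}\,\mu)\big)^{q'}dz\le N\mu(Q).
$$
The route is to show that $(M(b^{r}))^{1/r}$, with $q<r<p$, is an $A_{1}$-type weight with the same Morrey norm; replacing $b$ by it then makes the testing condition easy via the $A_{1}$ property.

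\textbf{Main obstacle.} I expect Step 3 to be the main obstacle, i.e. transferring the Euclidean Adams/Fefferman--Phong machinery to parabolic cylinders. Since only metric and doubling properties are used, each ingredient should carry over verbatim: the Vitali covering lemma, boundedness of $M$ on $L_{r}$, the dyadic cubes, and Sawyer's testing theorem on spaces of homogeneous type. Constants will depend only on $d,q,p,\alpha,k$, as required.
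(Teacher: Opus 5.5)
The paper does not prove this statement. It is quoted as Theorem 4.5 of the author's earlier work, so there is no in-paper argument to compare yours with. Judged on its own, your plan does contain a workable proof: Fefferman's majorant argument in the Chiarenza--Frasca form, transplanted to the parabolic quasi-metric. Step 1 is correct, and if $\alpha\geq d+2$ then $p\alpha>d+2$, so $b=0$. The real content is the last sentence of Step 3. For $q<r<p$ and $W=(M(b^{r}))^{1/r}$:
\begin{itemize}
\item $b\leq W$ a.e.;
\item $w:=W^{q}=(M(b^{r}))^{q/r}\in A_{1}$ by Coifman--Rochberg, since $q/r<1$;
\item $\|W\|_{\dot E_{p,\alpha}}\leq N\|b\|_{\dot E_{p,\alpha}}$, since $M$ is bounded on the Morrey space with exponent $p/r>1$.
\end{itemize}
This is exactly where $q<p$ is used, and all of it holds on the parabolic space of homogeneous type.

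Several surrounding claims, however, are wrong or circular and should be dropped rather than relied on.
\begin{itemize}
\item The far part $\int_{\varrho\geq\delta}$ of $I_{\alpha}f$ is not pointwise bounded by $M_{\alpha}f$. It is a sum over all dyadic scales of terms each $\leq M_{\alpha}f$.
\item The Muckenhoupt--Wheeden comparison of $I_{\alpha}f$ with $M_{\alpha}f$ is an integral inequality valid only for $A_{\infty}$ weights. It applies to $w$, but not to $b^{q}\,dxdt$.
\item The bound $\mu(Q)\leq N|Q|\ell(Q)^{-\alpha q}$ is the necessary but not sufficient Morrey condition, not the Kerman--Sawyer condition. ``Bump implies testing'' is what the majorant is needed for, not a citable fact.
\item A single dyadic grid does not dominate $I_{\alpha}$ pointwise; you need finitely many adjacent dyadic systems.
\end{itemize}

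Most importantly, the testing estimate for $w$ does not follow from the $A_{1}$ inequality alone. Combining $w(Q')/|Q'|\leq N\inf_{Q'}w$ with the Morrey bound only gives $\ell(Q')^{\alpha}w(Q')/|Q'|\leq N\|W\|_{\dot E_{p,\alpha}}w(x)^{1/q'}$ for every $Q'\ni x$. This bound is uniform in the scale, so it is useless after summing over infinitely many scales.

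To fix it, take $\theta\in(1/q',1)$ with $\theta q'-1$ small. Apply $A_{1}$ only to $(w(Q')/|Q'|)^{\theta}$ and the Morrey bound to the remaining factor. This produces a geometric series, so that for the dyadic model $I^{\mathcal D}_{\alpha}(I_{Q}w)\leq N\|W\|^{q(1-\theta)}_{\dot E_{p,\alpha}}\ell(Q)^{\alpha(1-q+q\theta)}w^{\theta}$ on $Q$. Then use the reverse H\"older inequality for $A_{1}$ weights, $\int_{Q}w^{\theta q'}\leq Nw(Q)(w(Q)/|Q|)^{\theta q'-1}$. The powers of $\ell(Q)$ cancel and you get $\int_{Q}(I^{\mathcal D}_{\alpha}(I_{Q}w))^{q'}\leq N\|W\|^{q'}_{\dot E_{p,\alpha}}w(Q)$.

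Alternatively, skip Sawyer's theorem. Apply Muckenhoupt--Wheeden for $w\in A_{1}$, then $\int(M_{\alpha}f)^{q}w\leq N\int f^{q}M_{\alpha q}w$, and finally $M_{\alpha q}w\leq\|W\|^{q}_{\dot E_{p,\alpha}}$ by H\"older. This is the Chiarenza--Frasca finish and is shorter.
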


\begin{corollary}
                 \label{corollary 10.5,1}
Estimate \eqref{5.25,1} says that the operator
$f\to bP_{\alpha,k}f$ is bounded in $L_{q}$. Its conjugate (with time reversed)
is then also bounded as an operator in 
$L_{q' }$, where $q '=q 
/(q -1)$, that is
$$
\|P_{\alpha,k}(bf)\|_{L_{q' }}
\leq N\|b\|_{\dot E_{p,\alpha} }
\|f\|_{L_{q' }}.
$$
In case $q =2$ and $\alpha=1$ we have that, if $p>2$, then
$$
\|P_{1,k}(bf)\|_{L_{2} }
\leq N\|b\|_{\dot E_{p,1} }
\|f\|_{L_{2} }. 
$$
\end{corollary}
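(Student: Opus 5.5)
The corollary follows from Theorem \ref{theorem 5.25,1} by duality. We need the formal adjoint of $T:f\to bP_{\alpha,k}f$ with respect to the pairing $\int_{\bR^{d+1}}fg\,dxdt$. By the second representation of $P_{\alpha,k}$ and Fubini's theorem, for nonnegative measurable $f,g$,
$$
\int_{\bR^{d+1}} g\, bP_{\alpha,k}f\,dxdt=\int_{\bR^{d+1}}\int_{\bR^{d+1}}g(t,x)b(t,x)p_{\alpha,k}(s-t,|y-x|)f(s,y)\,dsdy\,dxdt
$$
$$
=\int_{\bR^{d+1}} f(s,y)\,\tilde P_{\alpha,k}(bg)(s,y)\,dyds ,
$$
where $\tilde P_{\alpha,k}h(s,y)=\int p_{\alpha,k}(s-t,|y-x|)h(t,x)\,dxdt$ is the time-reversed potential (integration over $t<s$). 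For general sign we argue with $|f|,|g|$, since the kernel is nonnegative.

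Next we bound $\tilde P_{\alpha,k}(bg)$ by duality. Take $g\ge0$ in $L_{q'}$ and $f\ge0$ in $L_q$ with $\|f\|_{L_q}\le1$. Then Theorem \ref{theorem 5.25,1} and Hölder's inequality give
$$
\int f\,\tilde P_{\alpha,k}(bg)\,dyds=\int g\,bP_{\alpha,k}f\,dxdt\le \|g\|_{L_{q'}}\|bP_{\alpha,k}f\|_{L_q}\le N\|b\|_{\dot E_{p,\alpha}}\|g\|_{L_{q'}}.
$$
Taking the supremum over such $f$ yields $\|\tilde P_{\alpha,k}(bg)\|_{L_{q'}}\le N\|b\|_{\dot E_{p,\alpha}}\|g\|_{L_{q'}}$. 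Since $|\tilde P_{\alpha,k}(bg)|\le \tilde P_{\alpha,k}(b|g|)$, the same bound holds for signed $g$.

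Finally we pass from $\tilde P_{\alpha,k}$ to $P_{\alpha,k}$ by reversing time. The reflection $R h(t,x)=h(-t,x)$ preserves $L_{q'}$ norms and satisfies $P_{\alpha,k}=R\tilde P_{\alpha,k}R$. The one point needing care is whether the $\dot E_{p,\alpha}$ norm is reflection invariant, because cylinders $C_r(t,x)=[t,t+r^2)\times B_r(x)$ are only forward in time. The reflected cylinder is $(-t-r^2,-t]\times B_r(x)$, which coincides with a member of $\bC_r$ up to a null set. Hence $\|Rb\|_{\dot E_{p,\alpha}}=\|b\|_{\dot E_{p,\alpha}}$, and applying the previous bound to $Rb,Rf$ gives
$$
\|P_{\alpha,k}(bf)\|_{L_{q'}}\le N\|b\|_{\dot E_{p,\alpha}}\|f\|_{L_{q'}} .
$$

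The special case is $q=2$, so $q'=2$, together with $\alpha=1$. Theorem \ref{theorem 5.25,1} requires $1<q<p$, i.e.\ $p>2$, which is exactly the stated hypothesis, and this gives the last inequality.
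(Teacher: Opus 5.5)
Your proof is correct and takes the same route as the paper, whose justification is just the remark in the statement: the adjoint of $f\to bP_{\alpha,k}f$ is the time-reversed potential applied to $bg$, so $L_{q}$-boundedness transfers to $L_{q'}$. You have filled in the details the paper leaves implicit: the Tonelli computation of the adjoint, the $L_{q}$--$L_{q'}$ duality, and the invariance of $\|\cdot\|_{\dot E_{p,\alpha}}$ under $t\to -t$, which holds because a reflected cylinder agrees with a member of $\bC_{r}$ up to a null set.
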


A useful addition to the above properties
of multiplication by $b$ is the following.

\begin{lemma}
                          \label{lemma 11.29,1}
Let $p>2$, $f(x)\geq0$, $b\geq 0$, $T\in \bR$, $c=(4\pi)^{-d/2}$
,$$
u(t,x)=\frac{c}{(T-t)^{d/2}}\int_{\bR^{d}}e^{-|x-y|^{2}/(4T-4t )}f(y)\,dy\, I_{t<T}+I_{t=T}f(x).
$$
Then
\begin{equation}
                                \label{11.29,2}
\int_{ \bR^{d+1}}
b^{2}u^{2}\,dxdt\leq N(d,p)\|b\|^{2}_{\dot E_{p,1} }\int_{\bR^{d}}f^{2}\,dx.
\end{equation}
\end{lemma}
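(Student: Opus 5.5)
Assume $b$ and $f$ have compact support; monotone convergence removes this at the end. Write $u$ as a solution of the backward heat equation $\partial_t u+\Delta u=0$ for $t<T$ with $u(T,\cdot)=f$. The plan is a duality ($TT^*$) argument: reduce \eqref{11.29,2} to the boundedness of $g\to bP_{2,4}(bg)$ in $L_2(\bR^{d+1})$, and get that from Theorem \ref{theorem 5.25,1} and Corollary \ref{corollary 10.5,1}.

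First, define the operator $Sf=b u$, mapping $L_2(\bR^d)$ to $L_2(\bR^{d+1})$. Its adjoint acts on $g(t,x)$ by
$$
S^*g(y)=\int_{\bR^{d+1}} I_{t<T}\, c\,(T-t)^{-d/2}e^{-|x-y|^2/(4T-4t)}\,b(t,x)g(t,x)\,dxdt .
$$
Then $\|Sf\|^2\le \|S^*\|^2\|f\|^2_{L_2}$, and $\|S^*g\|_{L_2}^2=(SS^*g,g)$. Computing $\|S^*g\|^2_{L_2}$ and integrating first in $y$, the semigroup property of the Gaussian gives
$$
\|S^*g\|_{L_2}^2=\int\!\!\int_{t,s<T} (bg)(t,x)\,(bg)(s,z)\, c'\,(2T-t-s)^{-d/2}e^{-|x-z|^2/(4(2T-t-s))}\,dxdt\,dzds .
$$

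The key step is to dominate this kernel by the parabolic kernel $p_{2,k}$. We may take $g\ge0$. For $t,s<T$ we have $2T-t-s\ge |t-s|$, and $2T-t-s=|t-s|+2(T-\max(t,s))$. I cannot bound $(2T-t-s)^{-d/2}e^{-|x-z|^2/(4(2T-t-s))}$ directly by $|t-s|^{-d/2}e^{-|x-z|^2/(k|t-s|)}$, since the exponent goes the wrong way. So I will instead use the factorization: the kernel equals $\int_{\bR^d} G(T-t,x-y)G(T-s,z-y)\,dy$. Taking the parabolic variable $(\tau,y)$ with $\tau$ up to $T$, the product of two heat kernels relates to $p_{1,k}\ast p_{1,k}$-type compositions. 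The cleaner route is to interpret the Gaussian in $x-z$ with variance $2T-t-s$ as the integral
$$
\int_{\max(t,s)}^{T} \partial_\tau\text{-type terms}\,d\tau .
$$
Concretely, I would write $(2T-t-s)^{-d/2}e^{-|x-z|^2/(4(2T-t-s))}$ as the heat kernel from time $t$ forward to the reflected time $2T-s$. By the semigroup property this equals $N\int_{\bR^d}\int$ over intermediate points, and one checks it is bounded by $N\int_{\max(t,s)}^{\infty}$ of products $p_{1,k}(\tau-t,\cdot)\,p_{1,k}(\tau-s,\cdot)$ integrated in the intermediate space variable. This uses $\int_{\max}^\infty (\tau-t)^{-1/2}(\tau-s)^{-1/2}\dots d\tau$ with Gaussian factors, obtained after a direct computation.

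This yields $\|S^*g\|_{L_2}^2\le N\|P^{*}_{1,k}(bg)\|_{L_2(\bR^{d+1})}^2$, where $P^*_{1,k}$ is the time-reversed potential. Corollary \ref{corollary 10.5,1} with $q=2$, $\alpha=1$, $p>2$ then gives $\|P_{1,k}(bg)\|_{L_2}\le N\|b\|_{\dot E_{p,1}}\|g\|_{L_2}$, with time reversal harmless. Hence $\|S^*\|\le N\|b\|_{\dot E_{p,1}}$, which gives \eqref{11.29,2}.

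The main obstacle is the kernel domination step: showing that the reflected Gaussian $G(2T-t-s,x-z)$ is bounded by $\int_{\bR^{d+1}} p_{1,k}(\tau-t,|y-x|)\,p_{1,k}(\tau-s,|y-z|)\,dyd\tau$. I would first try to prove it by restricting $\tau\in(\max(t,s)+(T-\max),\,\max+2(T-\max))$ and estimating the Gaussians there. A fallback that avoids the kernel bound is to write $u^2$ through $\partial_t(u^2)+\Delta(u^2)=2|Du|^2$ and use Theorem \ref{theorem 10.5,1}(i).
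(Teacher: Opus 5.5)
Your duality ($TT^{*}$) route is correct and differs from the paper's. However, the kernel comparison you single out needs a longer $\tau$-window than the one you propose.

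\textbf{The kernel comparison.} Write $G(a,x)=(4\pi a)^{-d/2}e^{-|x|^{2}/(4a)}$. Integrating out the intermediate space variable gives
\[
\int_{\bR^{d}}p_{1,k}(\tau-t,|y-x|)\,p_{1,k}(\tau-s,|y-z|)\,dy=\frac{N(d,k)\,e^{-|x-z|^{2}/(k(2\tau-t-s))}}{(\tau-t)^{1/2}(\tau-s)^{1/2}(2\tau-t-s)^{d/2}} .
\]
Your window $\tau\in(T,2T-\max(t,s))$ has length only $T-\max(t,s)$. On it, $(\tau-t)^{-1/2}(\tau-s)^{-1/2}$ integrates to at most a constant times $\sqrt{(T-\max(t,s))/|t-s|}$. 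This tends to zero when $|t-s|\gg T-\max(t,s)$, so for instance at $x=z$ no uniform constant comes out.

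Set instead $L=2T-t-s$ and use $\tau\in(T,T+L/2)$.
\begin{itemize}
\item There $2\tau-t-s\in(L,2L)$, so for $k\geq4$ the Gaussian factor is at least $2^{-d/2}L^{-d/2}e^{-|x-z|^{2}/(4L)}$.
\item Since $T-\min(t,s)\leq L$, both $\tau-t$ and $\tau-s$ are at most $3L/2$. Hence the $\tau$-integral of $(\tau-t)^{-1/2}(\tau-s)^{-1/2}$ over this window is at least $1/3$.
\end{itemize}
This yields, for $t,s<T$,
\[
G(2T-t-s,x-z)\leq N(d,k)\int_{\tau>\max(t,s)}\int_{\bR^{d}}p_{1,k}(\tau-t,|y-x|)\,p_{1,k}(\tau-s,|y-z|)\,dyd\tau .
\]
Hence $\|S^{*}g\|_{L_{2}}\leq N\|P^{*}_{1,k}(bgI_{t<T})\|_{L_{2}}$ for $g\geq0$. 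Corollary \ref{corollary 10.5,1} then finishes the argument, since the $\dot E_{p,1}$ norm is invariant under $t\to-t$.

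You also do not need $S$ bounded a priori. For $f,g\geq0$, Tonelli gives $\int bug\,dxdt=\int fS^{*}g\,dy$, and taking the supremum over $g\geq0$ with $\|g\|_{L_{2}}\leq1$ gives \eqref{11.29,2} directly.

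\textbf{Drop the fallback.} The identity $\partial_{t}(u^{2})+\Delta(u^{2})=2|Du|^{2}\geq0$ only yields $u^{2}\leq G(T-t,\cdot)*f^{2}$. That would require $\sup_{y}\int_{t<T}b^{2}(t,x)G(T-t,x-y)\,dxdt<\infty$. This fails already for $b=\varepsilon|x|^{-1}\in\dot E_{p,1}$ with $2<p<d$: the quantity is $N\varepsilon^{2}\int|x|^{-2}|x-y|^{2-d}\,dx=\infty$.

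\textbf{Comparison with the paper.} The paper argues by PDE means:
\begin{itemize}
\item it reflects time, cuts off with $\zeta(t)$, and gets the energy bound $\int|Dw|^{2}\leq\frac12\int f^{2}$;
\item it represents $u$ for $t<T$ as $-cP_{2,4}\big((2\Delta w+v\zeta')I_{(0,\infty)}\big)$;
\item it moves one derivative onto the kernel via $|P_{2,4}\Delta w|\leq NP_{1,8}|Dw|$;
\item it applies Theorem \ref{theorem 5.25,1} to the main term, and Theorem \ref{theorem 5.25,1} with Corollary \ref{corollary 10.5,1} (using $\zeta'\in\dot E_{p,1}$) to the cutoff term.
\end{itemize}
Your argument needs no cutoff, no It\^o formula and no energy estimate, and uses Corollary \ref{corollary 10.5,1} once. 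It also shows transparently why a $P_{1}$-potential rather than a $P_{2}$-potential appears. The $L_{2}$ norm at time $T$ produces the reflected-time kernel $G(2T-t-s,\cdot)$, which is dominated by the composed kernel above ($P_{1,k}$ composed with its adjoint) but not by $p_{2,k}(|t-s|,|x-z|)$.

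Because your argument uses only a Gaussian upper bound and Chapman--Kolmogorov, it extends to kernels with Aronson-type upper bounds. The paper's proof, in turn, stays within the representation-plus-Adams machinery it reuses in the proofs of Theorems \ref{theorem 11.26,3} and \ref{theorem 6.21.1}.
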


Proof. We may assume that $T=0$ and $f$ is smooth
and bounded. In that case
set $v(t,x)=u(-t,x)$, $w(t,x)=v(t,x)\zeta(t)$, $t\geq0$,
where $\zeta$ is infinitely differentiable, $\zeta=1$ near $(-\infty,0]$, $\zeta(t)=0$ for $t\geq 1$,
$\zeta\geq0,\zeta'\leq0$.
Observe that $\partial_{t}u+\Delta u=0$ for $t<0$,
$\partial_{t}v=\Delta v$ for $t>0$,
$\partial_{t}w=\Delta w+v\zeta'$ for $t>0$,
which after being multiplied by $w$ and integrating by parts 
yields
\begin{equation}
                             \label{11.29,3}
\int_{\bR^{d+1}_{0}}|Dw|^{2}\,dxdt=
(1/2)\int_{\bR^{d}}f^{2}\,dx
+\int_{0}^{\infty}\Big(\int_{\bR^{d}_{0}}v^{2}\,dx
\Big)
\zeta\zeta'\,dt\leq (1/2)\int_{\bR^{d}}f^{2}\,dx,
\end{equation}
where $\bR^{d+1}_{t}:=(t,\infty)\times\bR^{d}$.

By It\^o's formula for $t\geq0$ we have
$$
w(t,x)=-cP_{2,4}(\partial_{t}w+\Delta w)(t,x)=-
cP_{2,4}(2\Delta w+v\zeta')(t,x).
$$
For us the most important is that this holds with
$t=0$ when $w(0,x)=f(x)$. Now
  the semigroup property of the heat semigroup 
  implies that
  for $t<0$ we have
$$
u(t,x)=
\frac{c}{(-t)^{d/2}}\int_{\bR^{d}}
e^{-|x-y|^{2}/(-4t)}w(0,y)\,dy
=-cP_{2,4}\big((2\Delta w+v\zeta')I_{0,\infty)}\big)(t,x).
$$
 
Next, we use that $|P_{2,4}\Delta w|=|(D_{i}
P_{2,4}D_{i} w|\leq NP_{1,8}|Dw|$ and
$P_{2,4}=NP_{1,8}P_{1,8}$ combined with the fact 
that, obviously, $\zeta'\in \dot E_{p,1}$.
Then by applying Theorem \ref{theorem 5.25,1} and Corollary
\ref{corollary 10.5,1} we arrive at
$$
\int_{(-\infty,0)\times\bR^{d}}
b^{2}\big(P_{2,4}(I_{0,\infty)}\Delta w)\big)^{2}\,dxdt
\leq N\| bP_{1,8}| I_{0,\infty)}Dw|\,\|^{2}_{L_{2} }
$$
$$
\leq N\|b\|^{2}_{\dot E_{p,1} }
\|Dw\|^{2}_{L_{2}(\bR_{0}^{d+1})},
$$
$$
\int_{(-\infty,0)\times\bR^{d}}
b^{2}\big(P_{2,4}|I_{0,\infty)}v\zeta'|\big)^{2}\,dxdt
\leq N\|b\|^{2}_{\dot E_{p,1} }
\|P_{1,8}|I_{0,\infty)}v\zeta'|\,\|^{2}_{L_{2} }
$$
$$
\leq N\|b\|^{2}_{\dot E_{p,1} }\|vI_{(0,1)}\|
^{2}_{L_{2} }.
$$
After that it only remains to use \eqref{11.29,3}
and   that, for any $t>0$,
$$
\int_{\bR^{d}}v^{2}(t,x)\,dx\leq \int_{\bR^{d}}f^{2}\,dx.
$$
The lemma is proved. \qed

\mysection{Proof of Theorem \ref{theorem 11.26,3}} Without restricting   generality
we assume that $\hat {\sfa }_{p_{0},\rho_{0}}
\leq 1$, $\hat b_{p_{0} , \rho_{0} }
\leq 1$. 

{\em Step 1. Integrating by parts\/}. Take a $C\in\bC_{\rho_{0}}$ and 
a nonnegative $\zeta
\in C^{\infty}_{0}(C)$  with the integral
of its square equal to one.
We    multiply  \eqref{11.26,5}
by $\zeta^{2} u^{2n-1}$ and integrate by parts. Then noting that $a^{ij}D_{i}uD_{j}u\geq\delta|Du|^{2}$ and for any $\varepsilon>0$
$$
\int_{[s,T]\times \bR^{d}} |u^{n}\zeta^{-1}D(\zeta^{2} )  |\zeta|u^{n-1} Du| \,dxdt
\leq \varepsilon\int_{[s,T]\times \bR^{d}}\zeta^{2} u  ^{2n-2}
|Du |^{2}\,dxdt
$$
$$
+\varepsilon^{-1}\int_{[s,T]\times \bR^{d}}
|D\zeta|^{2}u^{2n}\,dxdt,
$$
we find for $s\leq T$ that
$$
\int_{\bR^{d}}\zeta^{2}(s,x)u^{2n}(s,x)\,dx+(\delta/4)
\int_{[s,T]\times \bR^{d}}\zeta^{2} u  ^{2n-2}
|Du |^{2}\,dxdt
$$
$$
\leq \int_{\bR^{d}}\zeta^{2}(T,\cdot)f ^{2n }   \,dx +N\int_{[s,T]\times \bR^{d}}
|D\zeta|^{2}u^{2n}\,dxdt+I_{0}+
I_{1}+I_{2}+I_{3},
$$
where
$$
I_{0}=-\int_{[s,T]\times \bR^{d}}
u^{2n}\partial_{t}\zeta^{2}\,dxdt,
$$
$$
2 I_{1}=- \int_{[s,T]\times \bR^{d}}\zeta \sfa^{i}u^{n}D_{i} \zeta u^{n}\,dxdt
$$
$$
\leq
\int_{[s,T]\times \bR^{d}}\big(|D \zeta|^{2}  u^{2n}+  \zeta ^{2}|\sfa|^{2}u^{2n}\big)\,dxdt,
$$
$$
2I_{2}=-(2n-1)\int_{[s,T]\times \bR^{d}}\zeta^{2}u^{2n-1}\sfa^{i}D_{i}u\,dxdt
$$
$$
\leq (\delta/8)\int_{[s,T]\times \bR^{d}}\zeta^{2} u  ^{2n-2}
|Du |^{2}\,dxdt+N\int_{[s,T]\times \bR^{d}} \zeta^{2}|\sfa|^{2}u^{2n} \,dxdt,
$$
$$
I_{3}=\int_{[s,T]\times \bR^{d}}\zeta^{2} u  ^{2n-1}b^{i}D_{i}u\,dxdt
$$
$$
\leq (\delta/16)\int_{[s,T]\times \bR^{d}}\zeta^{2} u  ^{2n-2}
|Du |^{2}\,dxdt+N\int_{[s,T]\times \bR^{d}} \zeta^{2}|b|^{2}u^{2n} \,dxdt,
$$
 
It follows that
$$
\int_{\bR^{d}}\zeta^{2}(s,x)u^{2n}(s,x)\,dx+(\delta/16)
\int_{[s,T]\times \bR^{d}} \zeta^{2} u  ^{2n-2}
|Du |^{2}\,dxdt
$$
$$
\leq  \int_{\bR^{d}}\zeta^{2}(T,\cdot)f ^{2n }   \,dx
+N \int_{[s,T]\times \bR^{d}}\big(|D\zeta |^{2}
 +\rho^{-2}\zeta^{2}+|\partial_{t}\zeta^{2}|\big) u^{2n}\,dxdt
$$
\begin{equation}
                               \label{11.28,4}
+N\int_{[s,T]\times \bR^{d}} ( |\sfa|+ |b|)^{2}\zeta^{2}u^{2n} \,dxdt.
\end{equation}

Before proceeding further we note that we may
look at $\zeta$ as a scaled and translated
function with support in $C_{1}$. Then it is seen that 
\begin{equation}
                              \label{1.21.3}
\rho_{0}^{d+2} |\zeta |^{2}
+
\rho_{0}^{d+4} |D\zeta |^{2}
+
\rho_{0}^{d+6} |\partial_{t}\zeta |^{2}
 \leq N (d),
\end{equation}
and we infer from \eqref{11.28,4} that
$$
\int_{\bR^{d}}\zeta^{2}(s,x)u^{2n}(s,x)\,dx+(\delta/16)
\int_{[s,T]\times \bR^{d}} \zeta^{2} u  ^{2n-2}
|Du |^{2}\,dxdt
$$
$$
\leq  \int_{\bR^{d}}\zeta^{2}(T,\cdot)f ^{2n }   \,dx
+N \rho^{-d-4}\int_{[s,T]\times \bR^{d}} I_{C} u^{2n}\,dxdt
$$
\begin{equation}
                               \label{1.13.2}
+N \int_{[s,T]\times \bR^{d}}   ( |\sfa|+ |b|)^{2}\zeta^{2}u^{2n} \,dxdt.
\end{equation}
 
{\em Step 2. Using Adams's theorem\/}.
To estimate the last term it is convenient
to transform \eqref{11.26,5} and define $u(t,x)=0$ for $t>T$. 
For $v:=u^{n}$ we have
$$
\partial_{t}(\zeta v)+ \Delta(\zeta v)+2\zeta\Big(\frac{1}{n}-1\Big)a^{ij}(D_{i}(u^{n/2}))D_{j}(u^{n/2})
$$
$$
+(1/2)\zeta D_{i}(a^{ij}D_{j}v)  
+\zeta  (b^{i}+(1/2)\sfa^{i}) D_{i}v -v\partial_{t}\zeta- \Delta(\zeta v)+(n/2)\zeta 
(D_{i}\sfa^{i})v=0 
$$
for $t\leq T$ with boundary value $\zeta v|_{t=T}
=\zeta(T,\cdot)f^{n}$.

Here the third term is either zero if $n=1$
or negative if $n\geq 2$ when $v\geq0$
($n$ is even).
Then by It\^o's formula, applied to $(\zeta v)
(t+r,x+\sqrt{2}w_{t})$, where $w_{t}$ is 
a $d$-dimensional Wiener process,
we get  that
$$
\int_{[s,T]\times \bR^{d}}|\sfa|^{2}\zeta^{2}v^{2}\,dxdt
\leq N \int_{\bR^{d+1}_{s}}|\sfa|^{2}I_{C  }P^{2}_{2,4}(F)\,dxdt
$$
\begin{equation}
                                 \label{11.28,5}
+
N \int_{[s,T]\times \bR^{d}}|\sfa|^{2}I_{C }
\hat T^{2}_{T-t}[\zeta(T,\cdot)f^{n}](x)\,dxdt,
\end{equation}
where $\hat T_{r}h(x)=Eh(x+\sqrt2 w_{r})$,
$$
F=(1/2)\zeta D_{i}(a^{ij}D_{j}v)  
+\zeta  (b^{i}+(1/2)\sfa^{i}) D_{i}v -v\partial_{t}\zeta- \Delta(\zeta v)+(n/2)\zeta 
(D_{i}\sfa^{i})v.
$$

Having in mind that $v(t,x)=0$ for $t>T$, note that 
$$
P_{2,4}(\zeta D_{i}(a^{ij}D_{j}v))=
 D_{i}\big(P_{2,4}(\zeta a^{ij} D_{j}v)\big)
-NP_{1,8}P_{1,8}\big(I_{C}(D_{i} \zeta) a^{ij}   D_{j}v\big)
$$
and since $|DP_{2,4}h|\leq NP_{1,8}|h|$, we have by Theorem
\ref{theorem 5.25,1}  
$$
\int_{\bR^{d+1}_{s}}|\sfa|^{2}I_{C  }
\big| D_{i}P_{2,4}(\zeta a^{ij} D_{j}v )\big  |^{2}\,dxdt
\leq N\int_{\bR^{d+1} }|\sfa|^{2}I_{C  }
  P_{1,8}^{2}(\zeta |a^{ij} D_{j}v| I_{(s,\infty)}) \,dxdt
$$
$$
\leq N\hat\sfa_{p_{0},\rho_{0}}^{2}
\int_{[s,T]\times \bR^{d}}\zeta^{2}|Dv|^{2}\,dxdt\leq N\hat \sfa_{p_{0},\rho_{0}}^{2}\rho_{0}^{-d-2}
\int_{[s,T]\times \bR^{d}}I_{C}|Dv|^{2}\,dxdt.
$$
Similarly, invoking also Corollary \ref{corollary 10.5,1} and Remark \ref{remark 11.30,1} we get 
$$
\int_{\bR^{d+1}_{s}}|\sfa|^{2}I_{C  }
\Big(P_{1,8}P_{1,8}\big(I_{C}|D \zeta  |\, Dv| \big)\Big)^{2}\,dxdt
$$
$$
\leq N\hat \sfa_{p_{0},\rho_{0}}^{2}
\int_{\bR^{d+1}_{s}}\Big( P_{1,8}\big(I_{C}|D \zeta  |\, Dv| \big)\Big)^{2}\,dxdt 
$$
$$
\leq
N\hat \sfa_{p_{0},\rho_{0}}^{2}\rho_{0}^{-d-2}\int_{[s,T]\times \bR^{d}}I_{C}|Dv|^{2}\,dxdt.
$$
In the same way applying Theorem
\ref{theorem 5.25,1} and Corollary \ref{corollary 10.5,1} we obtain
$$
\int_{\bR^{d+1}_{s}}|\sfa|^{2}I_{C }
 P^{2}_{2,4} (\zeta b^{i}D_{i}v  ) \,dxdt\leq N\hat \sfa_{p_{0},\rho_{0}}^{2}
\rho_{0}^{-d-2}\int_{[s,T]\times \bR^{d}}I_{C}|Dv|^{2}\,dxdt,
$$ 
$$
\int_{\bR^{d+1}_{s}}|\sfa|^{2}I_{C }
 P^{2}_{2,4} (\zeta \sfa^{i}D_{i}v ) \,dxdt\leq N\hat \sfa_{p_{0},\rho_{0}}^{2}
\rho_{0}^{-d-2}\int_{[s,T]\times \bR^{d}}I_{C}|Dv|^{2}\,dxdt.
$$ 

Using again the fact that $I_{C}\in\dot E_{p_{0} ,1}$ yields (we also use that
$\hat \sfa_{p_{0},\rho_{0}} \leq 1$)
$$
\int_{\bR^{d+1}_{s}}|\sfa|^{2}I_{C }
 P^{2}_{2,4} (I_{C}v \partial_{t}\zeta) \,dxdt\leq N \int_{[s,T]\times \bR^{d}}\rho_{0}^{2}|\partial_{t}\zeta|^{2}| v|^{2}\,dxdt
$$
$$
\leq N \rho_{0}^{-d-4}\int_{[s,T]\times \bR^{d}}I_{C} v^{2}\,dxdt.
$$
The following is now routine
$$
\int_{\bR^{d+1}_{s}}|\sfa|^{2}I_{C }
 P^{2}_{2,4} (\Delta(\zeta v) ) \,dxdt
$$
$$
\leq 
\int_{\bR^{d+1}_{s}}|\sfa|^{2}I_{C }
P^{2}_{1,8}(v|D\zeta| +\zeta|Dv| )\,dxdt
$$
$$
\leq
N\hat \sfa_{p_{0},\rho_{0}}^{2}\int_{[s,T]\times \bR^{d}} ( |D\zeta|^{2} v ^{2}+\zeta^{2}|Dv|^{2})\,dxdt
$$
$$
\leq
N\hat \sfa_{p_{0},\rho_{0}}^{2}\int_{[s,T]\times \bR^{d}} ( \rho_{0}^{-d-4} v ^{2}+\rho_{0}^{-d-2}|Dv|^{2})\,dxdt.
$$
 
To deal with the last term entering $F$, observe
that 
$$
P _{2,4} \big(\zeta (D_{i}\sfa^{i})v \big)=D_{i}P _{2,4} 
\big(\zeta  \sfa^{i} v   \big)
$$
$$
-P _{2,4} \big((D_{i}\zeta)  \sfa^{i} v \big)-
P _{2,4} \big(\zeta  \sfa^{i} D_{i}v \big),
$$
 where the last term  has been already dealt with above. Furthermore,
$$
\int_{\bR^{d+1}_{s}}|\sfa|^{2}I_{C }
 \big(D_{i}P _{2,4} \big(\zeta  \sfa^{i} v  \big)\big)^{2}\,dxdt 
$$
$$
\leq N\int_{\bR^{d+1}_{s}}|\sfa|^{2}I_{C }
  P ^{2}_{1,8} \big(\zeta  \sfa^{i} v   \big) \,dxdt\leq
N\hat \sfa_{p_{0},\rho_{0}}^{2}\int_{[s,T]\times \bR^{d}}
|\sfa|^{2}\zeta^{2}v^{2}\,dxdt,
$$
$$
\int_{\bR^{d+1}_{s}}|\sfa|^{2}I_{C }
P^{2} _{2,4} \big((D_{i}\zeta)  \sfa^{i} v \big)\,dxdt
$$
$$
\leq
N \int_{\bR^{d+1}_{s}}P^{2} _{1,8}
 \big(  |\sfa||vD\zeta|  \big)\,dxdt
$$
$$
\leq
N \int_{[s,T]\times \bR^{d}}   |D\zeta|^{2}v^{2}  \,dxdt
\leq
N \rho_{0}^{-d-4}\int_{[s,T]\times \bR^{d}}  v^{2}  \,dxdt.
 $$

To finish dealing with \eqref{11.28,5} we apply
Lemma \ref{lemma 11.29,1}
to estimate the last term and get 
$$
\int_{[s,T]\times \bR^{d}}|\sfa|^{2}\zeta^{2}v^{2}\,dxdt
\leq N \int_{\bR^{d}}\zeta^{2}(T,\cdot)
f^{2n}\,dx
$$
$$
 + N\hat \sfa_{p_{0},\rho_{0}}^{2}\rho_{0}^{-d-2}
\int_{[s,T]\times \bR^{d}}I_{C}|Dv|^{2}\,dxdt
+N \rho_{0}^{-d-4}\int_{[s,T]\times \bR^{d}}I_{C} v^{2}\,dxdt
$$
$$
+N_{1}\hat \sfa_{p_{0},\rho_{0}}^{2}\int_{[s,T]\times \bR^{d}}|\sfa|^{2}\zeta^{2}v^{2}\,dxdt.
$$
Requiring
\begin{equation}
                           \label{1.14,1}
N_{1}\hat \sfa_{p_{0},\rho_{0}}^{2}\leq 1/2,
\end{equation}
allows us to eliminate the last term in the preceding estimate. After that observing that
estimating the integral of $| b|^{2}\zeta^{2}v^{2}
$ is not much different and coming back to
\eqref{1.13.2}
yields
$$
\int_{\bR^{d}}\zeta^{2}(s,x)u^{2n}(s,x)\,dx+
\int_{[s,T]\times \bR^{d}} \zeta^{2} u  ^{2n-2}
|Du |^{2}\,dxdt
$$
$$
\leq N\int_{\bR^{d}}\zeta^{2}(T,\cdot)f ^{2n }   \,dx
+N \rho_{0}^{-d-4}\int_{[s,T]\times \bR^{d}} I_{C} u^{2n}\,dxdt
$$
\begin{equation}
                               \label{1.14.2}
 +N\big(\hat \sfa_{p_{0},\rho_{0}}^{2}
+\hat b_{p_{0},\rho_{0}}^{2}\big)\rho_{0}^{-d-2}
\int_{[s,T]\times \bR^{d}} I_{C} u  ^{2n-2}
|Du |^{2}\,dxdt.
\end{equation}

{\em Step 3. Using the arbitrariness of $C$\/}.
We substitute here $C=C_{\rho_{0}}(\tau,\xi)$
and $\zeta(t-\tau,x-\xi)$ in place of $C$ and $\zeta(t,x)$,
where $(\tau,\xi)\in\bR^{d+1}$. Then we multiply
both parts by $e^{-\lambda |\xi|}$ and integrate
through the resulting inequality with respect to
$(\tau,\xi)\in\bR^{d+1}$. 
At this point it is worth mentioning that since $f\in C^{\infty}_{0}(\bR^{d})$ and $a,\sfa$ and $b$
are sufficiently regular, $u$ and its derivatives
go to zero as $|x|\to\infty$ exponentially fast.
Therefore, our manipulations are well justified.
 
Note that
$$
e^{\lambda \rho_{0}}e^{-\lambda |x|}\geq \int_{\bR^{d+1}}\zeta^{2}(t-\tau,x-\xi)e^{-\lambda |\xi|}\,d\xi d\tau\geq  
e^{-\lambda \rho_{0}}e^{-\lambda |x|},
$$
 $$
e^{\lambda \rho_{0}}e^{-\lambda |x|}\geq N(d)\int_{\bR^{d+1}}\rho_{0}^{-d-2}I_{C_{\rho_{0}}}(t-\tau,x-\xi)e^{-\lambda |\xi|}\,d\xi d\tau\geq  
e^{-\lambda \rho_{0}}e^{-\lambda |x|}.
$$

Consequently,
$$
e^{\lambda\rho_{0}}\int_{\bR^{d}}u^{2n}(s,x)e^{-\lambda|x|}\,dx
+\int_{[s,T]\times \bR^{d}}  u  ^{2n-2}
|Du |^{2}e^{-\lambda|x|}\,dxdt
$$
$$
\leq Ne^{ \lambda \rho_{0}} \int_{\bR^{d}}f^{2n}
e^{-\lambda|x|}\,dx +N e^{2\lambda \rho_{0}}
\rho_{0}^{-2}\int_{[s,T]\times \bR^{d}}  u^{2n}e^{-\lambda|x|}\,dxdt
$$
\begin{equation}
                                 \label{12.9,4}
+
N_{2}e^{2\lambda \rho_{0}}\big(\hat \sfa_{p_{0},\rho_{0}}^{2}
+\hat b_{p_{0},\rho_{0}}^{2}\big)
\int_{[s,T]\times \bR^{d}} 
   u  ^{2n-2}
|Du |^{2}e^{-\lambda|x|}\,dxdt.
\end{equation}
The last term is absorbed into the left-hand side
if we require
$$
N_{2}e^{2\lambda \rho_{0}}\big(\hat \sfa_{p_{0},\rho_{0}}^{2}
+\hat b_{p_{0},\rho_{0}}^{2}\big)\leq 1/2
$$
and to deal with the first term on the left   we use Gronwall's inequality after
 throwing away the second term on the left. The theorem is proved. \qed

\begin{remark}
                        \label{remark 1.31.5}
It is probably worth mentioning one case
of estimates like \eqref{1.31.3} even though it is
unrelated to Morrey spaces. We mean the case
when
$$
\int_{\bR}\sup_{\bR^{d}}(|\sfa|+|b|)^{2}(t,x)\,
dt<\infty.
$$
In that case you just stop after \eqref{11.28,4} and use Gronwall's inequality.
\end{remark}

\mysection{Regularity of solutions of
stochastic equations as functions of initial data}

Here we start on our way to prove Theorem
\ref{theorem 3.7.1} and we work in the setting
it is stated in.

\begin{lemma} 
                                                     \label{lemma 6.21.1}
 Let $\eta\in\bR^{d}$   and
 $\xi_{s}(t,x,\eta)=(lb)D_{\eta}x_{s}(t,x)$. Then

(i) $\xi_{s}(t,x,\eta)$ satisfies \eqref{6.20.4}, hence, 
coincides with $\eta_{s}(t,x,\eta)$ for every $(t,x,\eta)$
with probability one for all $s$.

(ii) If $f(x)$ is infinitely differentiable with bounded derivatives,
then
\begin{equation}
                                                        \label{6.21.5}
Ef_{(\xi_{s}(t,x,\eta))}(x_{s}(t,x))\Big(=
E\big(f_{(\xi_{s}(t,x,\eta))}\big)(x_{s}(t,x))\Big)
=\big(Ef(x_{s}(t,x))\big)_{(\eta)}.
\end{equation}
\end{lemma}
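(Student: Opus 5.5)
The plan is to deduce (i) from the facts quoted just before Theorem \ref{theorem 3.7.1} about differentiability of $x_{s}(t,x)$ in $x$, and then (ii) from (i) by differentiating under the expectation.

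For (i), recall that the derivatives of $x_{s}(t,x)$ in the $(lb)$ sense exist and satisfy the equations obtained by formally differentiating \eqref{6.20.3}. Differentiating \eqref{6.20.3} in $x$ along $\eta$, and using the chain rule $D_{\eta}\big(\sigma(t+r,x_{r}(t,x))\big)=\sigma_{(D_{\eta}x_{r})}(t+r,x_{r})$ (similarly for $b$), shows that $\xi_{s}=(lb)D_{\eta}x_{s}(t,x)$ solves
$$
\xi_{s}=\eta+\int_{0}^{s}\sigma_{(\xi_{r})}(t+r,x_{r})\,dw_{r}+\int_{0}^{s}b_{(\xi_{r})}(t+r,x_{r})\,dr.
$$
To make the formal step rigorous I would do the following.
\begin{itemize}
\item Write the difference quotient $\varepsilon^{-1}\big(x_{s}(t,x+\varepsilon\eta)-x_{s}(t,x)\big)$. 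By the mean value formula in integral form it satisfies a linear equation whose coefficients are $\int_{0}^{1}D\sigma\big(t+r,x_{r}+\theta(x_{r}(t,x+\varepsilon\eta)-x_{r})\big)\,d\theta$, and similarly for $b$.
\item Pass to the limit in $L_{q}$, using the boundedness of $D\sigma$ and $Db$, the continuity of $D\sigma$ and $Db$ in $x$, and the moment bounds \eqref{6.21.1}.
\end{itemize}
This is standard material from Sections 2.7--2.8 of \cite{Kr_77}. Once the limit equation is established, uniqueness for the linear system \eqref{6.20.4} with bounded coefficients, which follows from a Gronwall plus Burkholder--Davis--Gundy argument, gives $\xi_{s}=\eta_{s}(t,x,\eta)$ a.s. Since both processes are continuous in $s$, the equality holds for all $s$ simultaneously.

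For (ii), I would start from
$$
\varepsilon^{-1}\big(Ef(x_{s}(t,x+\varepsilon\eta))-Ef(x_{s}(t,x))\big)
=E\int_{0}^{1}f_{(\zeta^{\varepsilon})}\big(x_{s}+\theta(x_{s}(t,x+\varepsilon\eta)-x_{s})\big)\,d\theta,
$$
where $\zeta^{\varepsilon}=\varepsilon^{-1}\big(x_{s}(t,x+\varepsilon\eta)-x_{s}(t,x)\big)$. Three facts control the limit $\varepsilon\downarrow0$:
\begin{itemize}
\item $Df$ is bounded and uniformly continuous;
\item $\zeta^{\varepsilon}\to\xi_{s}$ in $L_{2}$, by the definition of the $(lb)$ derivative;
\item $x_{s}(t,x+\varepsilon\eta)\to x_{s}(t,x)$ in probability.
\end{itemize}
Together these show that the right-hand side converges to $E f_{(\xi_{s})}(x_{s}(t,x))$. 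The uniform integrability needed here comes from the $L_{2}$ convergence of $\zeta^{\varepsilon}$ and the boundedness of $Df$.

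This shows that the directional derivative of $Ef(x_{s}(t,x))$ along $\eta$ exists and equals $E f_{(\xi_{s})}(x_{s})$. The identity $f_{(\xi_{s})}(x_{s})=(f_{(\xi_{s})})(x_{s})$ is just notation: we evaluate $\xi^{i}D_{i}f$ at $x_{s}$ with $\xi=\xi_{s}$.

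The only step requiring care is the limit passage in (i), namely convergence of the difference quotients to the solution of the differentiated equation. I would not reprove it; I would quote it from \cite{Kr_77}, since the coefficients are in $B^{0,\infty}$.
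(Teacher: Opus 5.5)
Your proposal is correct and follows the paper's route. Part (i) is taken from \cite{Kr_77}: the $(lb)$ derivative of $x_{s}(t,x)$ solves the formally differentiated equation, which is the linear system \eqref{6.20.4}, and uniqueness identifies it with $\eta_{s}(t,x,\eta)$. Part (ii) then follows from (i) by interchanging differentiation and expectation. The only difference is that the paper also cites \cite{Kr_77} for this interchange, whereas you verify it directly using the mean value formula, the $L_{2}$ convergence of the difference quotients, and the boundedness and uniform continuity of $Df$.
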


Proof. Assertion (i) is alluded to before
Theorem  \ref{theorem 3.7.1} and is well known (see, for instance, \cite{Kr_77}).
  Assertion (ii)   follows from (i)
and the fact that (see, for instance, \cite{Kr_77})
$$
\big(Ef(x_{s}(t,x))\big)_{(\eta)}=Ef_{(\xi_{s}(t,x,\eta))}(x_{s}(t,x)).
$$
The lemma is proved.\qed

Thus $\eta_{s}$ is the first derivative of $x_{s}(t,x)$ in $x$ and
our goal in this section is to estimate
$E|\eta_{s}(t,x,\eta)|^{2n}$ without using
the information on the smoothness of $\sigma,b$
but rather based only on the Morrey norms of $D\sigma,b$ (and not $Db$).

Here is the main starting point.
\begin{lemma}
                                                     \label{lemma 6.21.10}
Let $f(x,\eta)$ be infinitely differentiable and such that
each of its derivatives grows  as $|x|+|\eta|\to\infty$
not faster than polynomially. Let $T\in\bR$. Then

(i) for $t\leq T$, the function
$u(t,x,\eta):=Ef\big((x_{T-t},\eta_{T-t})(t,x,\eta)\big)$
is infinitely differentiable in $(x,\eta)$ and 
each of its derivatives   by absolute
value is bounded on each finite   interval in
$(-\infty,T]$
by a constant times $(1+|x|+|\eta|)^{m}$
for some $m$,

(ii) for each $x,\eta$ the function $u(t,x,\eta)$ is Lipschitz continuous with respect to $t\in[0,T]$, 

(iii) in $(0,T)\times \bR^{2d}$  (a.e.)
$\partial_{t}u(t,x,\eta)$ exists and
$$
0= \partial_{t}u(t,x,\eta)+ (1/2)\sigma^{ik}\sigma^{jk}(t,x)u_{x^{i}x^{j}} (t,x,\eta)
+\sigma^{ik}\sigma_{(\eta)}^{jk}(t,x)u_{x^{i}\eta^{j}} (t,x,\eta)
$$
$$
+(1/2)\sigma_{(\eta)}^{ik} \sigma_{(\eta)}^{jk}(t,x)u_{\eta^{i}\eta^{j}}(t,x,\eta)
+b^{i}(t,x)u_{x^{i}} (t,x,\eta)+b^{i}_{(\eta)}(t,x)u_{\eta^{i}} (t,x,\eta)
$$
\begin{equation}
                          \label{6.21.3}
=:\partial_{t}u(t,x,\eta)+\check \cL(t,x,\eta)u(t,x,\eta).
\end{equation}
\end{lemma}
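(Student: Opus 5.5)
The plan is to treat $(x_s,\eta_s)$ as the solution of one diffusion system in $\bR^{2d}$ driven by $w$, with coefficients
$$
\Sigma(t,x,\eta)=\big(\sigma(t,x),\sigma_{(\eta)}(t,x)\big),\qquad B(t,x,\eta)=\big(b(t,x),b_{(\eta)}(t,x)\big).
$$
These coefficients are infinitely differentiable in $(x,\eta)$, Borel in $t$, and grow at most linearly in $\eta$. Moreover, every derivative of order at least two is bounded by a constant times a power of $1+|\eta|$, and in fact the coefficients are affine in $\eta$.

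For (i), I would invoke the classical differentiability theory of Sections 2.7, 2.8 of \cite{Kr_77}, already recorded in \eqref{6.21.1}. All derivatives $D^{(n)}_{(l,\xi)}(x_s,\eta_s)$ exist in the $L_q$-sense, with moments bounded by $N(1+|\eta|^m)$ uniformly for $s\le T-t$ in a finite interval. By the chain rule, the derivatives of $u$ are then expectations of polynomials in these derivatives times derivatives of $f$ evaluated at $(x_{T-t},\eta_{T-t})$. The polynomial growth of the derivatives of $f$ is handled by the standard moment bound $E\sup_{s\le S}(|x_s|+|\eta_s|)^q\le N(1+|x|+|\eta|)^q$ together with Hölder's inequality, which gives the bound $(1+|x|+|\eta|)^m$. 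Differentiation under $E$ is justified by the $L_q$-convergence of the difference quotients and the uniform integrability provided by these moment bounds.

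For (ii) and (iii), I would avoid differentiating in $t$ directly, since the coefficients are only Borel in $t$. Instead I would use the Markov (flow) property. For $t\le t'\le T$,
$$
u(t,x,\eta)=E\,u\big(t',(x_{t'-t},\eta_{t'-t})(t,x,\eta)\big),
$$
which holds by uniqueness of strong solutions, since the time-shifted solution started at time $t'$ from the state reached coincides with the continuation of the original solution. Then I would apply Itô's formula to $u(t',\cdot)$, which is smooth by (i), along the process on $[0,t'-t]$. This gives
$$
u(t,x,\eta)-u(t',x,\eta)=E\int_0^{t'-t}\big(\check\cL(t+r)u(t',\cdot)\big)(x_r,\eta_r)\,dr .
$$
The integrand is bounded by a polynomial in $|x_r|+|\eta_r|$ because of (i) and the growth of $\Sigma,B$. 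This yields Lipschitz continuity in $t$ with a locally uniform constant, and hence (ii).

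For (iii), Lipschitz continuity gives the existence of $\partial_tu$ a.e. To identify it, I would divide the above identity by $t'-t$. First I would replace $u(t',\cdot)$ by $u(t,\cdot)$ inside, using continuity in $t$ of the spatial derivatives of $u$. That continuity follows similarly from (i) applied to the differentiated equations, or from the flow identity applied to derivatives. I would also use the continuity of the paths. Then I would apply the Lebesgue differentiation theorem in $t$ to the Borel-in-$t$ coefficients $\sigma(t,x),b(t,x)$ and their $x$-derivatives. This gives \eqref{6.21.3} at Lebesgue points.

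The main obstacle is this last step. Lebesgue points must be taken for all $(x,\eta)$ simultaneously, and the coefficients are merely measurable in $t$. I would handle it by first proving the integrated identity
$$
u(t,x,\eta)=u(T,x,\eta)+\int_t^T\check\cL(r,x,\eta)u(r,x,\eta)\,dr
$$
for each fixed $(x,\eta)$, by letting the partition mesh go to zero in a telescoping sum of the Itô identities. Differentiating this identity in $t$ a.e. for each $(x,\eta)$, together with Fubini's theorem, then gives the a.e. statement in $(0,T)\times\bR^{2d}$.
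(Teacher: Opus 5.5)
Your argument is correct, but it takes a different route from the paper's. For (ii) and (iii) the paper never works directly with the coefficients that are only Borel in $t$. Instead it:
\begin{itemize}
\item mollifies $\sigma,b$ in $t$;
\item takes $\partial_t u^\varepsilon+\check\cL^\varepsilon u^\varepsilon=0$ for the mollified problem from Theorem 2.9.10 of \cite{Kr_77};
\item reads off $\varepsilon$-uniform bounds on $\partial_t u^\varepsilon$ from this equation and the $\varepsilon$-independent bounds of (i);
\item passes to the limit using Theorem 2.8.1 and related results of \cite{Kr_77} (pointwise convergence of $u^\varepsilon$ and of its $(x,\eta)$-derivatives), together with convergence of the mollified coefficients for every $x$ and a.e. $t$.
\end{itemize}

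You instead combine the Markov identity $u(t,\cdot)=E\,u\big(t',(x_{t'-t},\eta_{t'-t})(t,\cdot)\big)$ with It\^o's formula for the spatially smooth $u(t',\cdot)$. This gives (ii) directly, with a Lipschitz constant of polynomial growth in $(x,\eta)$. You then obtain the integrated identity $u(t)=u(T)+\int_t^T\check\cL u\,dr$ for every $(x,\eta)$ by telescoping, so Lebesgue differentiation plus Fubini settles the ``a.e.'' issue cleanly.

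The paper's route is shorter because all time regularity is outsourced to the smooth case and a stability theorem. Yours is self-contained given (i), at the price of two facts you only sketch:
\begin{itemize}
\item a version of the solution measurable in the initial data, which the Markov identity needs;
\item continuity in $t$ of the $(x,\eta)$-derivatives of $u$ up to order two, which the telescoping step genuinely requires.
\end{itemize}
For the second fact, differentiate the Markov identity. Put $X=(x_h,\eta_h)(t,x,\eta)$ with $h=t'-t$. The identity expresses $D^{k}_{(x,\eta)}u(t,x,\eta)$, $k\le 2$, through expectations of derivatives of $u(t',\cdot)$ of order at most $k$ at $X$, multiplied by first and second $(x,\eta)$-derivatives of $X$. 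Part (i) gives uniform polynomial bounds on the derivatives of $u(t',\cdot)$ up to order three. Combine these with short-time $L_q$-bounds of order $h^{1/2}$ (times a polynomial in $|\eta|$) for $X-(x,\eta)$, $D_{(x,\eta)}X-I$ and $D^2_{(x,\eta)}X$. This yields H\"older-$1/2$ continuity in $t$, locally uniformly in $(x,\eta)$, which is more than the telescoping needs.
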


Proof. Assertion (i) is known from above.
To prove the rest, first suppose that $\sigma,b $
are infinitely differentiable in both $t$ and
$x$ with each derivative being bounded. In that case the result follows directly from Theorem
2.9.10 of \cite{Kr_77}. In the general case
take a $\zeta\in C^{\infty}_{0}(\bR)$
with unit integral
and for $\varepsilon>0$ introduce
$\zeta_{\varepsilon}(t)=\varepsilon^{-1}\zeta
(t/\varepsilon)$, $(\sigma^{\varepsilon},b
^{\varepsilon})(t,x)=(\sigma,b)(t,x)*\zeta_{\varepsilon}(t)$, where the convolution is performed with respect to $t$. Denote
by $x^{\varepsilon}_{t},\eta^{\varepsilon}_{t}$
the corresponding processes and set
$$
u^{\varepsilon}(t,x,\eta):=Ef\big((x^{\varepsilon}_{T-t},\eta^{\varepsilon}_{T-t})(t,x,\eta)\big).
$$
Since the assertions of the lemma are true for
$u^{\varepsilon}$, its derivative in $x,\eta$
admit the stated estimates (independent of $\varepsilon$) and then equation \eqref{6.21.3}
provides uniform in $\varepsilon$ estimates
of $\partial_{t}
u^{\varepsilon}(t,x,\eta)$.
 By Theorem 2.8.1
of \cite{Kr_77} $x^{\varepsilon}_{t},\eta^{\varepsilon}_{t}
\to
x _{t},\eta _{t}$, as $\varepsilon\downarrow0$, in such a sense that
$u^{\varepsilon}(t,x,\eta):=Ef\big((x^{\varepsilon}_{T-t},\eta^{\varepsilon}_{T-t})(t,x,\eta)\big)\to u^{\varepsilon}(t,x,\eta)$
at any point in $(-\infty,T]\times \bR^{2d}$.
By the results in \cite{Kr_77} also the derivatives in $\eta,x$ of $u^{\varepsilon}(t,x,\eta)$ converge to the corresponding
derivatives of $u (t,x,\eta)$.
By adding to this that, as is well known
$\sigma^{\varepsilon},b^{\varepsilon}$ and
their derivatives in $x$ converge to 
$\sigma ,b $ and their corresponding derivatives for every $x$ and almost any $t$,
we find in $(0,T)\times \bR^{2d}$  (a.e.) that
$$
\lim_{\varepsilon\downarrow0}\partial_{t}
u^{\varepsilon}(t,x,\eta)
=\check \cL(t,x,\eta)u(t,x,\eta).
$$
This easily proves (iii) and the lemma. \qed

Next, we take a  
nonnegative function $f(x,\eta)$, which
is a {\em polynomial\/} with respect to $\eta$
with   coefficients that are in $C^{\infty}_{0}(\bR^{d})$.
Then for $T\in(0,\infty)$ and $t\leq T$ denote 
$$
u(t,x,\eta)=Ef(x_{T-t}(t,x),\eta_{T-t}(t,x,\eta)).
$$ 
According to Lemma \ref{lemma 6.21.10}
the function $u(t,x,\eta)$ satisfies
\eqref{6.21.3} and, since $\eta_{T-t}(t,x,\eta)$
is affine in $\eta$, $u(t,x,\eta)$ is a polynomial
in $\eta$.
 
Recall that
$$
 \widehat {D\sigma }_{p_{0},\rho}:=
\sup_{r\leq \rho}r\sup_{C\in \bC_{r}}
\dashnorm D\sigma\|_{L_{p_{0}}(C)}.
$$
\begin{theorem}
                   \label{theorem 6.21.1}
 
Let $n\in\{ 1,2,...\}, \lambda\geq 0$.
Then there
are constants $\widehat{D\sigma},\hat b\in(0,1]$,
depending only on $d,\delta,p_{0}$,  
$n $,  and the power of the polynomial $f(x,\eta)$,
such that if 
\begin{equation}   
                             \label{12.10,1}
 \widehat {D\sigma }_{p_{0},\rho_{0}}\leq 
e^{- \lambda\rho_{0}}\widehat{D\sigma},\quad\hat b_{p_{0} , \rho_{0} }
\leq e^{- \lambda\rho_{0}}\hat b,
\end{equation}
 then
\begin{equation}
                              \label{12.20,5}
\int_{\bR^{d}}\sup_{|\eta|\leq 1}|u(0,x,\eta)|^{2n}e^{-\lambda|x|}\,dx
\leq Ne^{ \alpha T}\int_{\bR^{d}}\sup_{|\eta|\leq 1}|f(x,\eta)|^{2n}e^{-\lambda|x|}\,dx,
\end{equation}
where 
$$
\alpha=N\rho_{0}^{-2}e^{ \lambda\rho_{0}}
$$
and the constants called $N$ depend  only on $d,\delta,p_{0}$,  
$n $ and the power of the polynomial $f(x,\eta)$.

\end{theorem}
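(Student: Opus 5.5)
We will reduce Theorem \ref{theorem 6.21.1} to an energy estimate of the same type as in the proof of Theorem \ref{theorem 11.26,3}, now for the degenerate operator $\check\cL$ in \eqref{6.21.3} acting in $(x,\eta)$. Since $u(t,x,\eta)$ is a polynomial in $\eta$ of degree at most $m$ (the degree of $f$), all norms on the finite-dimensional space of such polynomials are equivalent. We therefore replace $\sup_{|\eta|\le1}|u|^{2n}$ by
$$
\int_{B_{1}}|u(t,x,\eta)|^{2n}\,d\eta,
$$
or better by $\int_{\bR^{d}}|u|^{2n}\mu(\eta)\,d\eta$ with a smooth rapidly decaying weight $\mu$, e.g. $\mu=e^{-|\eta|^{2}}$. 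This changes constants only by factors depending on $d,m,n$. The target becomes
$$
\int\!\!\int|u(0,x,\eta)|^{2n}\mu(\eta)e^{-\lambda|x|}\,dxd\eta\le Ne^{\alpha T}\int\!\!\int|f|^{2n}\mu e^{-\lambda|x|}\,dxd\eta .
$$

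\textbf{Step 1 (integration by parts).} Multiply \eqref{6.21.3} by $\zeta^{2}(t,x)\mu(\eta)u^{2n-1}$, with $\zeta$ as in Step 1 of the previous proof, and integrate over $[s,T]\times\bR^{2d}$. We rewrite $\check\cL$ in divergence form in $x$ with the help of $D\sigma$. The term $(1/2)a^{ij}u_{x^ix^j}$ becomes $-(1/2)D_{i}(a^{ij})D_ju$ plus a good term, and $D a$ is controlled by $|D\sigma|$. The positive quadratic form gives
$$
\zeta^{2}\mu u^{2n-2}\big|\sigma^{*}\nabla_{x}u+\sigma_{(\eta)}^{*}\nabla_{\eta}u\big|^{2}.
$$
Only the $x$-directions are nondegenerate, since $\sigma\sigma^*\in\bS_\delta$. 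The mixed and $\eta\eta$ terms are handled by integrating by parts in $\eta$, which is free: $\sigma_{(\eta)}$ is linear in $\eta$, so $\eta$-derivatives of the coefficients produce $D\sigma$ again, and derivatives falling on $\mu$ produce factors $|\eta|\mu\lesssim\mu^{1/2}$. After Cauchy--Schwarz, every bad term should be bounded by
$$
\varepsilon\int\zeta^{2}\mu u^{2n-2}|D_{x}u|^{2}+N\int(|D\sigma|^{2}(1+|\eta|^{2})+|b|^{2}+|D b||\eta|\cdots)\zeta^{2}\tilde\mu u^{2n}
$$
with a slightly larger Gaussian weight $\tilde\mu$. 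The $b_{(\eta)}\cdot\nabla_\eta u$ term must be integrated by parts: $b_{(\eta)}^iu_{\eta^i}u^{2n-1}=\frac1{2n}\eta^k D_kb^i\,\partial_{\eta^i}(u^{2n})$. Moving $\partial_{\eta^i}$ onto $\eta^k\mu$ gives $D_k b^i$ times a function of $\eta$. Moving $D_k$ off $b$ onto $\zeta^2u^{2n}$ yields $|b|\,|D_xu|\,u^{2n-1}$ and $|b||D\zeta|u^{2n}$. This is how we avoid any assumption on $Db$.

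\textbf{Step 2 (Adams).} The terms $\int(|D\sigma|^2+|b|^2)\zeta^2u^{2n}\tilde\mu$ are estimated exactly as in Step 2 of the previous proof, for each fixed $\eta$. Setting $v=u^n$, we write the equation for $\zeta v$ as the heat equation in $(t,x)$ with right-hand side $F$ containing $D_x$-divergence terms, $b\cdot D_xv$, $\sigma_{(\eta)}$-terms involving $\eta$-derivatives, and $\partial_t\zeta$, $D\zeta$ terms. For $n\ge2$ we drop the sign-definite term $-(n-1)/n$ times the square of the first-order operator. We represent $\zeta v=P_{2,4}(\cdot)$ plus the heat extension of the terminal data, and apply Theorem \ref{theorem 5.25,1}, Corollary \ref{corollary 10.5,1}, Remark \ref{remark 11.30,1} and Lemma \ref{lemma 11.29,1} with weights $|D\sigma|$, $|b|$. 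This bounds these terms by small multiples of the Dirichlet-type integrals, the $\rho_0^{-d-4}\int I_Cv^2$ term, the terminal data, and the quantity itself times $N\widehat{D\sigma}^2$. Integrating in $\eta$ against $\tilde\mu$ and absorbing the last term under \eqref{12.10,1} gives an analog of \eqref{1.14.2}.

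\textbf{Step 3.} As in Step 3 of the previous proof, we average over translates of $C$ with weight $e^{-\lambda|\xi|}$, absorb the Morrey terms under \eqref{12.10,1}, and apply Gronwall.

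\textbf{Main obstacle.} The hardest part is the mismatch of weights in $\eta$. Bad terms carry extra powers of $|\eta|$ and the $\eta$-gradient, while the coercive term controls only $\sigma^*\nabla_xu+\sigma^*_{(\eta)}\nabla_\eta u$, not $\nabla_\eta u$ separately. We would first try to exploit the polynomial structure. On polynomials of degree $\le m$, $\int|\nabla_\eta u|^2u^{2n-2}|\eta|^k\tilde\mu\,d\eta\le N\int u^{2n}\mu\,d\eta$ by norm equivalence, and similarly for changing $\tilde\mu$ back to $\mu$. These terms then become $|D\sigma|^2\zeta^2\int u^{2n}\mu\,d\eta$. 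That quantity is of exactly the form Step 2 controls, applied to $V(t,x)=(\int u^{2n}\mu\,d\eta)^{1/2}$, or more safely to each coefficient of $u^n$ in a fixed polynomial basis.
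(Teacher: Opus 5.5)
Your proposal is correct and follows the paper's proof. The common scheme is:
\begin{itemize}
\item a cylinder-localized energy estimate;
\item the Adams theorem (with Corollary \ref{corollary 10.5,1}, Remark \ref{remark 11.30,1} and Lemma \ref{lemma 11.29,1}), applied at each fixed $\eta$ to $\zeta u^{n}$ represented through $P_{2,4}$, with $Db$ eliminated by moving the $x$-derivative off $b$;
\item averaging over translates with weight $e^{-\lambda|\xi|}$, absorption under \eqref{12.10,1}, and Gronwall;
\end{itemize}
all tied together by norm equivalence for polynomials in $\eta$.

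The only real difference is in Step 1. You use a Gaussian weight in $\eta$ and integrate by parts in $\eta$ to obtain the full degenerate quadratic form. The paper instead keeps $\eta\in B_{1}$ as a parameter, integrates by parts only in $(t,x)$, treats the $\sigma_{(\eta)}$ and $b_{(\eta)}$ terms as perturbations, and absorbs $\varepsilon\int u^{2n-2}|u_{x\eta}|^{2}$ into $\int u^{2n-2}|Du|^{2}$ via the multiplicative polynomial inequality of Lemma \ref{lemma 12.8.1}.
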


By taking $\lambda=0$ and using the arbitrariness of $\rho_{0}$
we come to the following.
\begin{corollary}
                             \label{corollary 2.6.1}
 If $\|D\sigma\|_{\dot E_{p_{0},1}}\leq \widehat{D\sigma}$
 and $\| b\|_{\dot E_{p_{0},1}}\leq \hat b$, where
 $\widehat{D\sigma},\hat b$ are taken from Theorem \ref{theorem 6.21.1},
 then
 $$
 \int_{\bR^{d}}\sup_{|\eta|\leq 1}|u(0,x,\eta)|^{2n} \,dx
\leq Ne^{N T}\int_{\bR^{d}}\sup_{|\eta|\leq 1}|f(x,\eta)|^{2n} \,dx,
$$
where the constants called $N$ depend  only on $d,\delta,p_{0}$,  
$n $ and the power of the polynomial $f(x,\eta)$.

\end{corollary}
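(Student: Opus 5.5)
Corollary \ref{corollary 2.6.1} follows from Theorem \ref{theorem 6.21.1} with $\lambda=0$ by a scaling, or limiting, argument in $\rho_{0}$. The constants $\widehat{D\sigma},\hat b$ and the constants $N$ in Theorem \ref{theorem 6.21.1} depend only on $d,\delta,p_{0},n$ and the degree of $f$. In particular they do not depend on $\rho_{0}$. So it suffices to check that, under the homogeneous Morrey assumptions of the corollary, the hypotheses \eqref{12.10,1} with $\lambda=0$ hold for every $\rho_{0}\in(0,\infty)$, and then to let $\rho_{0}\to\infty$.

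\emph{Step 1: verifying \eqref{12.10,1} for every $\rho_{0}$.} By definition,
$$
\widehat{D\sigma}_{p_{0},\rho_{0}}=\sup_{r\le\rho_{0}}\, r\sup_{C\in\bC_{r}}\dashnorm D\sigma\|_{L_{p_{0}}(C)}\le \sup_{\rho>0,\,C\in\bC_{\rho}}\rho\,\dashnorm D\sigma\|_{L_{p_{0}}(C)}=\|D\sigma\|_{\dot E_{p_{0},1}}\le\widehat{D\sigma}.
$$
The same bound holds for $\hat b_{p_{0},\rho_{0}}\le\hat b$. Since $\lambda=0$, the factors $e^{-\lambda\rho_{0}}$ equal $1$. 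Hence \eqref{12.10,1} holds for every $\rho_{0}>0$.

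\emph{Step 2: applying the theorem and letting $\rho_{0}\to\infty$.} Theorem \ref{theorem 6.21.1} with $\lambda=0$ gives
$$
\int_{\bR^{d}}\sup_{|\eta|\le1}|u(0,x,\eta)|^{2n}\,dx\le N e^{N\rho_{0}^{-2}T}\int_{\bR^{d}}\sup_{|\eta|\le1}|f(x,\eta)|^{2n}\,dx .
$$
Here $N$ does not depend on $\rho_{0}$. Taking, say, $\rho_{0}=1$ already gives the stated bound $Ne^{NT}$. Letting $\rho_{0}\to\infty$ gives the stronger bound with the factor $N$ alone. Either way the corollary follows.

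The only point needing care is that $p_{0}$ in the corollary is the same fixed $p_{0}\in(2,2+d]$ as in Theorem \ref{theorem 6.21.1}. One must also confirm that no hidden $\rho_{0}$-dependence enters the constants of Theorem \ref{theorem 6.21.1}. The statement of that theorem asserts this explicitly, so I expect no real obstacle; the argument is a direct specialization.
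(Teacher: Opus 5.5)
Your proposal is correct and is essentially the paper's own argument: the paper derives the corollary from Theorem~\ref{theorem 6.21.1} ``by taking $\lambda=0$ and using the arbitrariness of $\rho_{0}$.'' You supply the details it leaves implicit, namely that $\|\cdot\|_{\dot E_{p_{0},1}}$ dominates the truncated Morrey norms for every $\rho_{0}$ and that the constants of that theorem do not depend on $\rho_{0}$; your side remark that letting $\rho_{0}\to\infty$ even removes the factor $e^{NT}$ is also valid, since $\alpha=N\rho_{0}^{-2}$ when $\lambda=0$.
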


The proof of Theorem \ref{theorem 6.21.1} is rather long
and we present  it in a separate section.

\mysection{Proof of Theorem \protect\ref{theorem 6.21.1}}
                       \label{section 1.14.1}

We need the following  which is similar to
Lemma 5.8 of \cite{Kr_25_1}.

\begin{lemma}
                         \label{lemma 12.8.1}

Let an integer $n\geq1$ and suppose that for 
$i=1,...,n$ we are given $p_{i} >0$,
integers $k_{i}\geq1$, and   polynomials $A_{i}(\eta)$ of degree $k_{i}$
on $\bR^{d}$. Then there exists a constant
$N=N(d, n,p_{i},\kappa_{i})$ such that
\begin{equation}
                        \label{12.18.1}
|A_{1}|^{p_{1}}\cdot...\cdot|A_{n}|^{p_{n}}\leq 
N\int_{B_{1}} | A_{1}(\eta))|^{p_{1}}\cdot...\cdot | A_{n}(\eta)|^{p_{n}}
\,d\eta,
\end{equation}
where $|A_{i}|$ is the maximum of absolute
values of the coefficients of $A$, written
without similar terms.
\end{lemma}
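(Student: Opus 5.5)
The inequality is a compactness and homogeneity statement, so I would prove it by contradiction using the finite dimensionality of the space of coefficient tuples. Let $V_{i}$ be the finite-dimensional space of polynomials on $\bR^{d}$ of degree at most $k_{i}$, with the norm $|A_{i}|$ (the maximum of absolute values of the coefficients). Both sides of \eqref{12.18.1} are homogeneous in each $A_{i}$ separately with degree $p_{i}$: replacing $A_{i}$ by $cA_{i}$ multiplies both sides by $|c|^{p_{i}}$. If some $A_{i}=0$ the left-hand side vanishes and there is nothing to prove. Otherwise we may normalize $|A_{i}|=1$ for every $i$, so it suffices to show that
$$
\Phi(A_{1},...,A_{n}):=\int_{B_{1}}|A_{1}(\eta)|^{p_{1}}\cdot...\cdot|A_{n}(\eta)|^{p_{n}}\,d\eta
$$
is bounded below by a positive constant on the product $K$ of the unit spheres $\{|A_{i}|=1\}$ in $V_{i}$.

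First, $K$ is compact, being a product of unit spheres in finite-dimensional spaces. Second, $\Phi$ is continuous on $V_{1}\times...\times V_{n}$. On $\bar B_{1}$ the monomials are bounded by $1$, so $|A_{i}(\eta)|\leq N|A_{i}|$ uniformly in $\eta$, and $A_{i}\to A'_{i}$ in $V_{i}$ implies uniform convergence on $\bar B_{1}$. The map $t\mapsto|t|^{p_{i}}$ is continuous (H\"older for $p_{i}<1$, which causes no trouble), so the integrand converges uniformly, and dominated convergence on the bounded set $B_{1}$ gives continuity of $\Phi$. Third, $\Phi>0$ on $K$. Each $A_{i}$ with $|A_{i}|=1$ is a nonzero polynomial, so its zero set is a closed set of Lebesgue measure zero. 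Therefore the product $A_{1}\cdot...\cdot A_{n}$ is nonzero almost everywhere in $B_{1}$, the integrand is positive a.e., and the integral is positive.

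Hence $\min_{K}\Phi=:1/N>0$, and since the left-hand side of \eqref{12.18.1} equals $1$ on $K$, homogeneity yields \eqref{12.18.1} with this $N$. The constant depends only on $d,n,k_{i},p_{i}$, because $K$ and $\Phi$ are determined by these data.

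The only point needing care is the claim that a nonzero polynomial vanishes only on a null set. I would prove it by induction on $d$ via Fubini: write the polynomial as a polynomial in $\eta^{d}$ whose coefficients are polynomials in the remaining variables, with at least one of these coefficients nonzero. The main obstacle, such as it is, lies in this step together with the continuity of $\Phi$ when some $p_{i}<1$; both are elementary. Note also that the constant is not explicit, since it comes from the compactness argument.
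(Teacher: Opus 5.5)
Your proof is correct. It rests on the same two ingredients as the paper's, compactness in a finite-dimensional coefficient space and the fact that a nonzero polynomial vanishes only on a null set, but it organizes the compactness differently.

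The paper first reduces the lemma to a uniform sublevel-set estimate for a single polynomial. For every $\gamma>0$ there is $\varepsilon=\varepsilon(d,k,\gamma)>0$ such that
$$
|B_{1}\cap\{|A(\eta)|\leq\varepsilon\}|\leq\gamma|B_{1}|
$$
for all $A$ of degree $k$ with $|A|=1$. It proves this by a compactness argument on the distribution functions of $|A(\eta)|$. Taking $\gamma=1/(2n)$, the set where all $|A_{i}|>\varepsilon$ fills at least half of $B_{1}$. This gives \eqref{12.18.1} with $N=2|B_{1}|^{-1}\varepsilon^{-(p_{1}+\dots+p_{n})}$, where $\varepsilon$ depends only on $d$, $n$ and the degrees.

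You instead apply compactness once, to the product of unit spheres, using the continuous and strictly positive functional $\Phi$. Your route is shorter, self-contained, and avoids the detour through distribution functions. The paper's route handles the factors one polynomial at a time and shows that the constant has the explicit form $\varepsilon^{-\sum p_{i}}$, with $\varepsilon$ independent of the exponents $p_{i}$. Your abstract $\min_{K}\Phi$ does not reveal that structure.
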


Proof. As it is not hard to see, it suffices
to prove that for any polynomial $A(\eta)$
of degree $k$ with $|A|=1$ and any $\gamma>0$ there exists $\varepsilon>0$,
depending only on $d,k,\gamma$,
such that
$$
|B_{1}\cap\{|A(\eta)|\leq\varepsilon \}
\leq  \gamma |B_{1}|.
$$
We are going to treat $A(\eta)$ as a random
variable on the probability space $(B_{1},dx/|B_{1}|)$. Observe that the set $\frA$
of the $A(\eta)$'s is compact in $C(\bar B_{1})$,
and, since for any polynomial its any level set  has Lebesgue measure zero, the distribution functions $F_{A}$ of the $A(\eta)$'s
form a compact set $\frF$ in $C[0,1]$. It follows
that for given $\gamma$ we can find a finite $\gamma/2$-net $F_{A_{1}},...,F_{A_{m}}$ in $\frF$ and $\varepsilon>0$ such that $F_{A_{i}}(\varepsilon)\leq\gamma/2$
for any $i=1,...,m$, and then for any $A\in\frA$
we can find $F_{A_{i}}$ such that
$$
F_{A}(|A(\eta)|\leq\varepsilon)
\leq F_{A_{i}}(|A(\eta)|\leq\varepsilon)
+\gamma/2\leq\gamma.
$$
\qed
 
Now we start proving the theorem.

{\em Step 1\/}.  As in the proof of Theorem \ref{theorem 11.26,3}
we assume that $\widehat { D\sigma }_{p_{0},\rho_{0}}
\leq 1$, $\hat b_{p_{0} , \rho_{0} }
\leq 1$. Then take a $C\in\bC_{\rho_{0}}$ and 
a nonnegative $\zeta
\in C^{\infty}_{0}(C)$  with the integral
of its square equal to one,
   multiply  \eqref{6.21.3}
by $\zeta^{2} u^{2n-1}$ and integrate by parts
with respect to $(t,x)$ regarding $\eta$
as a parameter in $B_{1}$.

Then as in the proof of Theorem \ref{theorem 11.26,3} (cf. \eqref{11.28,4}) for $s\leq T$ we find
$$
\int_{\bR^{d}}\zeta^{2}(s,x)u^{2n}(s,x,\eta)\,dx+
\int_{[s,T]\times \bR^{d}} \zeta^{2} u  ^{2n-2}
|Du |^{2}\,dxdt
$$
$$
\leq N\int_{\bR^{d}}\zeta^{2}(T,\cdot)f ^{2n }   \,dx
+N \int_{[s,T]\times \bR^{d}}
\big(|\partial_{t}\zeta^{2}|+|D\zeta |^{2}\big) u^{2n}\,dxdt
$$
\begin{equation}
                               \label{12.7,2}
+N\int_{[s,T]\times \bR^{d}} \zeta^{2}(|D\sigma|+|b|)^{2}u^{2n} \,dxdt+\int_{[s,T]\times \bR^{d}}\zeta^{2}u^{2n-1}F\,dxdt,
\end{equation}
where
$$
F=\sigma^{ik}\sigma_{(\eta)}^{jk}(t,x)u_{x^{i}\eta^{j}} (t,x,\eta)
$$
$$
+(1/2)\sigma_{(\eta)}^{ik} \sigma_{(\eta)}^{jk}(t,x)u_{\eta^{i}\eta^{j}}(t,x,\eta)
 +b^{i}_{(\eta)}(t,x)u_{\eta^{i}} (t,x,\eta).
$$

Observe that ($a^{n}(a^{n-1}b)\leq\varepsilon^{-1}a^{2n}+\varepsilon a^{2n-2}b^{2}$)
for any $\varepsilon>0$
$$
\int_{[s,T]\times \bR^{d}}\zeta^{2}u^{2n-1}\sigma^{ik}\sigma_{(\eta)}^{jk} u_{x^{i}\eta^{j}} \,dxdt\leq
N \int_{[s,T]\times \bR^{d}} \zeta^{2} |D\sigma| ^{2}u^{2n} \,dxdt
$$
$$
+\varepsilon\int_{[s,T]\times \bR^{d}} \zeta^{2}u^{2n-2}|u_{x\eta}|^{2} \,dxdt,
$$
where and below we allow the constants $N$
to also depend on $\varepsilon$. Also
  $$
\int_{[s,T]\times \bR^{d}}\zeta^{2}u^{2n-1}
\sigma_{(\eta)}^{ik} \sigma_{(\eta)}^{jk} u_{\eta^{i}\eta^{j}} \,dxdt
$$
$$
\leq N\int_{[s,T]\times \bR^{d}}\zeta^{2}  |D\sigma|^{2}u^{2n-1}|u_{\eta\eta}|\,dxdt,
$$
 $$
\int_{[s,T]\times \bR^{d}}\zeta^{2}u^{2n-1}
b^{i}_{(\eta)} u_{\eta^{i}}\,dxdt
$$
$$
=-(2n-1)\int_{[s,T]\times \bR^{d}}\zeta^{2}
\big(b^{i} u^{n-1}u_{\eta^{i}}\big)\big(u^{n-1}u_{x^{j}}\eta^{j}\big)\,dxdt
$$
$$
-2\int_{[s,T]\times \bR^{d}}\zeta\zeta_{x^{j}}\eta^{j}\big(u^{n}
b^{i}\big)\big(u^{n-1}  u_{\eta^{i}}\big)\,dxdt
-\int_{[s,T]\times \bR^{d}}\zeta^{2}u^{2n-1}b^{i} u_{x^{j}\eta^{i}}\eta^{j}\,dxdt
$$
$$
\leq\varepsilon\int_{[s,T]\times \bR^{d}}\zeta^{2}
u^{2n-2}(|Du|^{2}+|u_{x\eta}|^{2})\,dxdt+N\int_{[s,T]\times \bR^{d}}\zeta^{2}|b|^{2}u^{2n-2}|u_{\eta}|^{2}\,dxdt
$$
$$
+N\int_{[s,T]\times \bR^{d}}\zeta^{2}|b|^{2}u^{2n } \,dxdt+N\int_{[s,T]\times \bR^{d}}|D\zeta|^{2}u^{2n-2}|u_{\eta}|^{2}\,dxdt.
$$
We substitute these estimate into \eqref{12.7,2} and get
$$
\int_{\bR^{d}}\zeta^{2}(s,x)u^{2n}(s,x,\eta)\,dx+
\int_{[s,T]\times \bR^{d}} \zeta^{2} u  ^{2n-2}
(|Du |^{2}-\varepsilon|u_{x\eta}|^{2})\,dxdt
$$
$$
\leq N\int_{\bR^{d}}\zeta^{2}(T,\cdot)f ^{2n }   \,dx
+N \int_{[s,T]\times \bR^{d}}
\big(|\partial_{t}\zeta^{2}|+|D\zeta |^{2}\big) u^{2n-2}\big(u^{2}+|u_{\eta}|^{2}\big)\,dxdt
$$
$$
+N\int_{[s,T]\times \bR^{d}} \zeta^{2}(|D\sigma|+|b|)^{2}u^{2n-2}\big(u^{2 }+u |u_{\eta\eta}|+|u_{\eta}|^{2}|\big) \,dxdt . 
$$
By integrating through this inequality
with respect to $\eta$,
using Lemma \ref{lemma 12.8.1} and choosing
$\varepsilon$ appropriately we finally find
$$
\int_{\bR^{d}\times B_{1}}\zeta^{2}(s,x)u^{2n}(s,x,\eta)\,dxd\eta+
\int_{[s,T]\times \bR^{d}\times B_{1}} \zeta^{2} u  ^{2n-2}
 |Du |^{2} \,dxdtd\eta
$$
$$
\leq N\int_{\bR^{d}\times B_{1}}\zeta^{2}(T,\cdot)f ^{2n }   \,dxd\eta
+N \int_{[s,T]\times \bR^{d}\times B_{1}}
\big(|\partial_{t}\zeta^{2}|+|D\zeta |^{2}\big) u^{2n } \,dxdtd\eta
$$
\begin{equation}
                           \label{12.7,4}
+N\int_{[s,T]\times \bR^{d}\times B_{1}} \zeta^{2}(|D\sigma|+|b|)^{2}u^{2n } \,dxdtd\eta . 
\end{equation}

{\em Step 2\/}. Here we are dealing with the last term in \eqref{12.7,4}. Introduce $w=u^{n}$ and observe that for the function
$\zeta w$ as in the proof of Theorem \ref{theorem 11.26,3} we have 
$$
\partial_{t}(\zeta w)+\Delta(\zeta w)
 +2\zeta\Big(\frac{1}{n}-1\Big)a^{ij}(D_{i}(u^{n/2}))D_{j}(u^{n/2})
+G=0,
$$
where
$$
G=-w\partial_{t}\zeta-\Delta(\zeta w) 
+(1/2)\zeta a^{ij}D_{ij}w  
+\zeta b^{i}D_{i}w 
$$
$$ 
+n\zeta u^{n-1}\Big(\sigma^{ik}\sigma^{jk}_{(\eta)}u_{x^{i}\eta^{j}}+(1/2)\sigma^{ik}_{(\eta)}\sigma^{jk}_{(\eta)}u_{\eta^{i}\eta^{j}}
 +b^{i}_{(\eta)}u_{\eta^{i}}\Big).
$$
Then again as in the proof of Theorem \ref{theorem 11.26,3},  defining $w$ for $t>T$ as zero (and using that $w\geq0$), we conclude that for $t<T$
$$
0\leq\zeta w(t,x,\eta)\leq P_{2,4}G
=h(t,x,\eta) 
$$
$$
+\big(J_{1}+J_{2}+(1/2)J_{3}+J_{4}+n[J_{5}+
(1/2)J_{6}+J_{7} ]\big)(t,x,\eta),
$$
where   
$$
h(t,x,\eta)=E\big[\zeta(T,x_{T-t}(t,x))f^{n}
\big(x_{T-t}(t,x),\eta \big)\big],
$$
$$
J_{1}=-P_{2,4}(w\partial_{t}\zeta),\quad
J_{2}=-P_{2,4}(\Delta(\zeta w))=-\big(
P_{2,4}( (\zeta w)_{x^{i}})\big)_{x^{i}},
$$
$$
J_{3}=P_{2,4}\big(\zeta a^{ij}w_{x^{i}x^{j}}  \big),\quad J_{4}= P_{2,4}\big(\zeta b^{i}D_{i}w\big)
$$
$$
J_{5}=P_{2,4}\big(\zeta u^{n-1}\sigma^{ik}\sigma^{jk}_{(\eta)}u_{x^{i}\eta^{j}}\big),
\quad
J_{6}=P_{2,4}\big(\zeta u^{n-1}\sigma^{ik}_{(\eta)}\sigma^{jk}_{(\eta)}u_{\eta^{i}\eta^{j}}\big),
$$
$$
J_{7}=P_{2,4}\big(\zeta u^{n-1}b^{i}_{(\eta)}u_{\eta^{i}}\big)=(1/n)
\eta^{k}\big(P_{2,4}\big(\zeta  b^{i} w_{\eta^{i}})\big)_{x^{k}}
$$
$$
-(1/n)P_{2,4}\big(\zeta_{(\eta)}  b^{i} w_{\eta^{i}})-(1/n)P_{2,4}\big(\zeta   b^{i} w_{\eta^{i}(\eta)}).
$$

First, by Lemma \ref{lemma 11.29,1}
$$
\int_{[s,T]\times \bR^{d}}I_{C}b^{2}h^{2}
\,dxdt\leq N\hat b^{2}_{p_{0},\rho_{0}}
 \int_{\bR^{d}}\zeta^{2}(T,x)f^{2n}(x,\eta)\,dx.
$$

Next,
since $P_{2,4}=NP_{1,8}P_{1,8}$
and  $I_{C}\in\dot E_{p_{0}, 1}$ 
by Remark \ref{remark 11.30,1},  
by Theorem \ref{theorem 5.25,1} and
Corollary \ref{corollary 10.5,1}
(this combination will be used repeatedly
below)
$$
\int_{\bR^{d+1}_{s}}I_{C}|b|^{2}J_{1}^{2}\,dxdt
$$
\begin{equation}
                           \label{10.11,5}
\leq N\hat b^{2}_{p_{0},\rho_{0}}\int_{\bR^{d+1}_{s}} P^{2}_{1,8}(I_{C}w|\partial_{t}\zeta|)  \,dxdt
\leq N\hat b^{2}_{p_{0},\rho_{0}}\int_{\bR^{d+1}_{s}}  u^{2n}\rho_{0}^{2}|\partial_{t}\zeta|^{2}  \,dxdt,
\end{equation}

Then,
$
|J_{2}|
\leq NP_{1,8}(|D(\zeta w) |)
$,
 so that
\begin{equation}
                           \label{10.11,6}
   \int_{\bR^{d+1}_{s}}I_{C}|b|^{2}J_{2}^{2} \,dxdt
\leq N\hat b^{2}_{p_{0},\rho_{0}}\int_{\bR^{d+1}_{s}} |D(\zeta w) |^{2}\,dxdt.
\end{equation}

Dealing with $J_{3}$ observe that 
$$
P_{2,4}\big(\zeta  a^{ij} w_{x^{i}x^{j}}\big)=
\big[P_{2,4}\big(\zeta  a^{ij}w_{ x^{j}}\big)\big]_{x^{i}}
$$
$$
-P_{2,4}\big(\zeta_{x^{i}} a^{ij} w_{ x^{j}}\big)
-P_{2,4}\big(\zeta  [\sigma^{ik}_{x^{i}}\sigma^{jk}+\sigma^{ik}\sigma^{jk}_{x^{i}}] w_{ x^{j}}\big) .
$$
It follows that 
$$
 \int_{\bR^{d+1}_{s}}I_{C}|b|^{2}J_{3}^{2} \,dxdt\leq N \int_{\bR^{d+1}_{s}}I_{C}|b|^{2}P_{1,8}^{2}(\zeta|Dw |) \,dxdt
$$
$$
+N\hat b^{2}_{p_{0},\rho_{0}}\int_{\bR^{d+1}_{s}}
P_{1,8}^{2}\big(I_{C}|D\zeta |\,|Dw |
+I_{C}|D\sigma |\zeta|Dw |\big) \,dxdt
$$
\begin{equation}
                           \label{10.11,7}
 \leq N\hat b^{2}_{p_{0},\rho_{0}}\int_{\bR^{d+1}_{s}}
(\zeta^{2} +\rho_{0}^{2}|D\zeta|^{2} )|Dw|^{2}\,dxdt,
\end{equation}
where we used that $\widehat{D\sigma}_{p_{0},\rho_{0}}\leq 1$.

Next,
$$
\int_{\bR^{d+1}_{s}}I_{C}|b|^{2}J_{4}^{2} \,dxdt
$$ 
\begin{equation}
                           \label{10.13,2}
\leq N\hat b^{2}_{p_{0},\rho_{0}}
\int_{\bR^{d+1}_{s}}P_{1,8}^{2}(|b|\zeta|Dw|)\,dxdt\leq N \hat b^{2}_{p_{0},\rho_{0}}
\int_{\bR^{d+1}_{s}} \zeta^{2} u^{2n-2}|Du|^{2} \,dxdt,
\end{equation}

$$
 \int_{\bR^{d+1}_{s}}I_{C}|b|^{2}J_{5}^{2} \,dxdt\leq N\hat b^{2}_{p_{0},\rho_{0}}\int_{\bR^{d+1}_{s}}P^{2}_{1,8}\big(I_{C}|D\sigma|\zeta u^{n-1}  |u_{x\eta}|\big)\,dxdt
$$
\begin{equation}
                           \label{10.11,8}
\leq N\hat b^{2}_{p_{0},\rho_{0}}\int_{\bR^{d+1}_{s}}  \zeta^{2}   u^{2n-2}  |u_{x\eta}|^{2}\,dxdt.
\end{equation}
Similarly,
$$
 \int_{\bR^{d+1}_{s}}I_{C}|b|^{2}J_{6}^{2} \,dxdt\leq N\hat b^{2}_{p_{0},\rho_{0}}\int_{\bR^{d+1}_{s}}P^{2}_{1,8}\big(I_{C}|D\sigma|(\zeta|D\sigma| | u^{n-1}  |u_{\eta\eta}|)\big)\,dxdt
$$
\begin{equation}
                           \label{10.13,1}
\leq N\hat b^{2}_{p_{0},\rho_{0}}\int_{\bR^{d+1}_{s}} \zeta^{2} |D\sigma|^{2} | u^{2n -2}   |u_{\eta\eta}|^{2} \,dxdt.
\end{equation}

Finally,
$$
\int_{\bR^{d+1}_{s}}I_{C}|b|^{2}J_{7}^{2} \,dxdt\leq N\hat b^{2}_{p_{0},\rho_{0}}\int_{\bR^{d+1}_{s}} 
\zeta^{2}|b|^{2} |w_{\eta}|^{2}\,dxdt
$$
$$
+N\hat b^{2}_{p_{0},\rho_{0}} \int_{\bR^{d+1}_{s}} \big( 
|D\zeta|^{2}  |w_{\eta}|^{2}+\zeta^{2}|w_{\eta (\eta)}|^{2}\big)\,dxdt.
$$
  
Summing up the above estimates 
and using \eqref{1.21.3} yields 
$$
 \int_{\bR^{d+1}_{s}} \zeta^{2}|b|^{2}u^{2n}\,dxdt\leq N\hat b^{2}_{p_{0},\rho_{0}}
 \int_{\bR^{d}}\zeta^{2}(T,x)f^{2n}(x,\eta)\,dx
$$
$$
+N\hat b^{2}_{p_{0},\rho_{0}}
\int_{\bR^{d+1}_{s}}\Big(\rho_{0}^{-d-4}I_{C}
(u^{2n} +|w_{\eta}|^{2})
$$
$$
+\rho_{0}^{-d-2}I_{C}\big[u^{2n-2}\big(|Du|^{2} + |u_{x\eta}|^{2}\big)+|w_{\eta(\eta)}|^{2}\big]
$$
$$
+\zeta^{2}|D\sigma|^{2}u^{2n-2}|u_{\eta\eta}|^{2}
+\zeta^{2}  |b|^{2}|w_{\eta}|^{2}  \Big)\,dxdt.
$$

{\em Step 3\/}.
Obviously, similar  estimate is valid
for
$$
\int_{\bR^{d+1}_{s}}I_{C}|D\sigma|^{2}w^{2}\,dxdt,
$$
which after adding it to the above one,
integrating over $B_{1}$ with respect to $\eta$ and using Lemma \ref{lemma 12.8.1}
yields
 $$
 \int_{\bR^{d+1}_{s}\times B_{1}}\zeta^{2}(|b|^{2}+|D\sigma|^{2})u^{2n}\,dxdtd\eta
\leq N 
 \int_{\bR^{d}\times B_{1}}\zeta^{2}(T,\cdot)f^{2n} \,dxd\eta
$$
$$
+ N\Big(\hat b^{2}_{p_{0},\rho_{0}}+\widehat{D\sigma}_{p_{0},\rho_{0}}^{2}\Big)\rho_{0}^{-d-4}
\int_{\bR^{d+1}_{s}\times B_{1}}   
I_{C}u^{2n} 
\,dxdtd\eta
$$
$$
+N\Big(\hat b^{2}_{p_{0},\rho_{0}}+\widehat{D\sigma}_{p_{0},\rho_{0}}^{2}\Big)\rho_{0}^{-d-2}
\int_{\bR^{d+1}_{s}\times B_{1}}I_{C}|Du|^{2}\,dxdtd\eta
$$
$$
+N_{1}\Big(\hat b^{2}_{p_{0},\rho_{0}}+\widehat{D\sigma}_{p_{0},\rho_{0}}^{2}\Big)
\int_{\bR^{d+1}_{s}\times B_{1}}
\zeta^{2}(|b|^{2}+|D\sigma|^{2})u^{2n}\,dxdtd\eta.
$$

For
\begin{equation}
                                 \label{12.9,6}
N_{1}\Big(\hat b^{2}_{p_{0},\rho_{0}}+\widehat{D\sigma}_{p_{0},\rho_{0}}^{2}\Big)
\leq 1/2
\end{equation}
this implies that
 $$
 \int_{\bR^{d+1}_{s}\times B_{1}}\zeta^{2}(|b|^{2}+|D\sigma|^{2})u^{2n}\,dxdtd\eta
\leq N 
 \int_{\bR^{d}\times B_{1}}\zeta^{2}(T,\cdot)f^{2n} \,dxd\eta
$$
$$
+ N \rho_{0}^{-d-4}
\int_{\bR^{d+1}_{s}\times B_{1}}   I_{C}
 u^{2n} 
\,dxdtd\eta
$$
$$
+N\Big(\hat b^{2}_{p_{0},\rho_{0}}+\widehat{D\sigma}_{p_{0},\rho_{0}}^{2}\Big)\rho_{0}^{-d-2}
\int_{\bR^{d+1}_{s}\times B_{1}}I_{C} u^{2n-2}|Du|^{2}\,dxdtd\eta.
$$
Coming back to \eqref{12.7,4} we get
$$
\int_{\bR^{d}\times B_{1}}\zeta^{2}(s,x)u^{2n}(s,x,\eta)\,dxd\eta+
\int_{[s,T]\times \bR^{d}\times B_{1}} \zeta^{2} u  ^{2n-2}
 |Du |^{2} \,dxdtd\eta
$$
$$
\leq N\int_{\bR^{d}\times B_{1}}\zeta^{2}(T,\cdot)f ^{2n }   \,dxd\eta
+N\rho_{0}^{-d-4} \int_{[s,T]\times \bR^{d}\times B_{1}}I_{C} u^{2n } \,dxdtd\eta
$$
$$
+N\Big(\hat b^{2}_{p_{0},\rho_{0}}+\widehat{D\sigma}_{p_{0},\rho_{0}}^{2}\Big)\rho_{0}^{-d-2}
\int_{\bR^{d+1}_{s}\times B_{1}} I_{C}u^{2n-2}|Du|^{2}\,dxdtd\eta.
$$

After that we repeat the same manipulations
as at the end of the proof of Theorem \ref{theorem 11.26,3}
and similarly to \eqref{12.9,4} find
$$
e^{\lambda\rho_{0}}\int_{\bR^{d}\times B_{1}} u^{2n}(s,x,\eta)e^{-\lambda|x|}\,dxd\eta
+
\int_{[s,T]\times \bR^{d}\times B_{1}}e^{-\lambda|x|} u  ^{2n-2}
 |Du |^{2} \,dxdtd\eta
$$
$$
\leq Ne^{ \lambda\rho_{0}}\int_{\bR^{d}\times B_{1}}e^{-\lambda|x|}f ^{2n }   \,dxd\eta
$$
$$
+Ne^{2\lambda\rho_{0}}\rho_{0}^{-2} \int_{[s,T]\times \bR^{d}\times B_{1}}e^{-\lambda|x|} u^{2n } \,dxdtd\eta
$$
$$
+N_{2}e^{2\lambda\rho_{0}}\Big(\hat b^{2}_{p_{0},\rho_{0}}+\widehat{D\sigma}_{p_{0},\rho_{0}}^{2}\Big)
\int_{\bR^{d+1}_{s}\times B_{1}} e^{-\lambda|x|}u^{2n-2}|Du|^{2}\,dxdtd\eta.
$$

Now along with \eqref{12.9,6} we require
$$
 N_{2}e^{2\lambda\rho_{0}}\Big(\hat b^{2}_{p_{0},\rho_{0}}+\widehat{D\sigma}_{p_{0},\rho_{0}}^{2}\Big)
\leq 1.
$$
Then
$$
\int_{\bR^{d}\times B_{1}} u^{2n}(s,x,\eta)e^{-\lambda|x|}\,dxd\eta
\leq N \int_{\bR^{d}\times B_{1}}e^{-\lambda|x|}f ^{2n }   \,dxd\eta
$$
$$
+Ne^{ \lambda\rho_{0}}\rho_{0}^{-2} \int_{[s,T]\times \bR^{d}
\times B_{1}}e^{-\lambda|x|} u^{2n } \,dxdtd\eta,
$$ 
and \eqref{12.20,5} follows.
\qed   
 
\mysection{Proof of Theorem \ref{theorem 3.7.1}}

First,
we need a version of one of Kolmogorov's
results. Actually, in this paper
where everything is nice and smooth we
do not need this version in full generality.
It will be indeed needed when in the future
we will approximate not so regular coefficients
and pass to the limit in the situation
when the sequence of random fields will converge
only at each point in probability.

Let $n,r,c_{1},...,c_{r}\geq 2$ be integers and set
$c=(c_{1},...,c_{r})$.
Denote by $\bZ^{r} _{n}(c)$
 the lattice
in $[0,1]^{r} $ consisting of  points
$z=( z^{1}c_{1}^{-n},...,z^{r}c_{r}^{-n}) $, where $z^{i}=0,1,2,...,c_{i} ^{n} $. Define
$$
\bZ^{r} _{\infty}(c)=\bigcup_{n}\bZ^{r} _{n}(c)
$$

\begin{lemma}
                                               \label{lemma 3.5.1} 
Let  $u(z)$ be  a real-valued
function defined for $z\in\bZ^{r} _{\infty}(c)$. Assume that there exist a number
$\alpha >0$ and an integer $n\geq0$ such that for any $m\geq n$, $z_{1},z_{2}
\in \bZ^{r}_{m}(c)$, $i=1,...,r$, such that
$ |z^{i}_{1}-z^{i}_{2}|\leq c_{i}^{-m}$,
$z^{j}_{1}=z^{j}_{2}$ for $j\ne i$, we have $|u(z_{1})-u(z_{2})|\leq 2^{-m \alpha}$. Then for any $z_{1},z_{2}\in \bZ^{r} _{\infty}(c)$,
satisfying  
\begin{equation}
                      \label{3.5.5}
   |z^{i}_{1}-z^{i}_{2}|\leq c_{i}^{-n-1},\quad i=1,...,r,
\end{equation}
 we have  
\begin{equation}
                     \label{3.2.1}
 |u(z_{1})-u(z_{2}) |\leq
N\max_{i}|z^{i}_{1}-z^{i}_{2}|^{\alpha_{i}},
\quad \alpha_{i}=\alpha\ln 2/\ln c_{i},
\end{equation}
 where $N$ depends only on $r,c_{1},...,c_{r}$, and $\alpha$.
Furthermore, by continuity $u$
extends on $[0,1]^{r}$ and,
for the extension, called again $u$,
\eqref{3.2.1} holds for any 
$z_{1},z_{2}\in [0,1]^{r}$ satisfying
\eqref{3.5.5}.
  
\end{lemma}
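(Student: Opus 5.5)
This is the classical chaining (dyadic-type) argument of Kolmogorov's continuity theorem, adapted to anisotropic lattices in which the $i$-th coordinate is refined by factor $c_i$ at each level. The idea is to join $z_1$ and $z_2$ by a path through successively coarser lattice points, changing one coordinate by one lattice step at a time.

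Let $z_1,z_2\in\bZ^r_\infty(c)$ satisfy \eqref{3.5.5}, and fix $M$ with $z_1,z_2\in\bZ^r_M(c)$. Let $k\ge n$ be the largest integer such that $|z_1^i-z_2^i|\le c_i^{-k-1}$ for all $i$; such $k$ exists by \eqref{3.5.5}. If $z_1\neq z_2$ then $k<\infty$, and maximality gives an index $i_0$ with $|z_1^{i_0}-z_2^{i_0}|>c_{i_0}^{-k-2}$.

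For $m=k,\dots,M$ and $j=1,2$, define $z_j(m)\in\bZ^r_m(c)$ coordinatewise by rounding down: $z_j^i(m)=c_i^{-m}\lfloor c_i^m z_j^i\rfloor$. Then $z_j(M)=z_j$. The points $z_j(m)$ and $z_j(m+1)$ are both in $\bZ^r_{m+1}(c)$, and in coordinate $i$ they differ by at most $(c_i-1)c_i^{-m-1}$. So they can be joined by at most $\sum_i(c_i-1)$ moves, each changing one coordinate by one step $c_i^{-m-1}$. By hypothesis each move changes $u$ by at most $2^{-(m+1)\alpha}$. Hence
$$
|u(z_j(m))-u(z_j(m+1))|\le N2^{-m\alpha},\qquad |u(z_j)-u(z_j(k))|\le N2^{-k\alpha}.
$$
At level $k$, $|z_1^i-z_2^i|\le c_i^{-k-1}<c_i^{-k}$, so the rounded coordinates $z_1^i(k),z_2^i(k)$ differ by at most one step $c_i^{-k}$. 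Therefore $z_1(k)$ and $z_2(k)$ are joined by at most $r$ admissible moves, giving $|u(z_1(k))-u(z_2(k))|\le r2^{-k\alpha}$. Altogether $|u(z_1)-u(z_2)|\le N2^{-k\alpha}$.

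The remaining step, which is the main obstacle, is converting $2^{-k\alpha}$ into the right-hand side of \eqref{3.2.1}. Use the index $i_0$: since $|z_1^{i_0}-z_2^{i_0}|>c_{i_0}^{-k-2}$,
$$
|z_1^{i_0}-z_2^{i_0}|^{\alpha_{i_0}}>c_{i_0}^{-(k+2)\alpha\ln2/\ln c_{i_0}}=2^{-(k+2)\alpha}.
$$
Thus $2^{-k\alpha}\le 4^{\alpha}\max_i|z_1^i-z_2^i|^{\alpha_i}$, which yields \eqref{3.2.1}. The care needed is in choosing $k$ as one level for all coordinates simultaneously, so that the single coordinate $i_0$ controls the bound; the different exponents $\alpha_i$ exactly compensate for the different refinement factors $c_i$.

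For the extension, $\bZ^r_\infty(c)$ is dense in $[0,1]^r$, and \eqref{3.2.1} shows that $u$ is uniformly continuous on it. So $u$ extends uniquely by continuity to $[0,1]^r$. The inequality \eqref{3.2.1} then passes to the limit for points satisfying \eqref{3.5.5}: approximate them by lattice points satisfying \eqref{3.5.5}, first perturbing slightly inward, and use continuity of both sides.
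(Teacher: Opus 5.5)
Your proposal is correct and follows essentially the paper's own proof. Both arguments truncate the base-$c_i$ expansions and chain each point down to a common level determined by the coordinate attaining $\min_i\lfloor \ln(1/|z^i_1-z^i_2|)/\ln c_i\rfloor$. Both then compare the two truncations there with a bounded number of one-coordinate lattice moves, and use that same extremal coordinate to convert $2^{-k\alpha}$ into $\max_i|z_1^i-z_2^i|^{\alpha_i}$. The only difference is cosmetic: your $k$ is the paper's $m$ minus one, so at the common level you need one step per coordinate instead of the paper's cruder bound of two, at the cost of a factor $4^{\alpha}$ in the final conversion.
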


Proof. The second assertion is obvious and to prove the first one let  $z_{1},z_{2}\in \bZ^{r} _{\infty}(c)$, $z_{1}=(z_{1}^{1},...,z_{1}^{r})$,
$z_{2}=(z_{2}^{1},...,z_{2}^{r})$. Represent $z_{1}^{i}$ and $z_{2}^{i}$ 
as  
 $$
z_{1}^{i}=\sum_{j=0}^{\infty}\varepsilon^{ij}_{1}c_{i}^{-j},\quad
z_{2}^{i}=\sum_{j=0}^{\infty}\varepsilon^{ij}_{2}c_{i}^{-j},\quad
\varepsilon^{ij}_{k}\in\{0,1,...,c_{i}-1\},
 $$
take some integer  $m  
\geq n$,  
 define $z^{i}_{1m}$ and $z^{i}_{2m}$ as the above sums for $j\leq m $, and let
 $$
z_{1m}=(z^{1}_{1m},...,z^{r}_{1m}),\quad z_{2m}=(z^{1}_{2m},
...,z^{r}_{2m}).
 $$
Then, as it is not hard to see, 
\begin{equation}
                     \label{3.5.3}
  |z^{i}_{1}-z^{i}_{2}|\leq c_{i}^{-m },
\quad i=1,...,r,
\end{equation}
 implies that   $|z_{1m}^{i}  -z^{i}_{2m}|
\leq 2c_{i}^{-m}$.  Since
  $z_{1m},z_{2m}\in \bZ^{r}_{m}(c)$,
\begin{equation}
                       \label{3.5.1}
|u(z_{1m})-u(z_{2m}) |\leq
2r2^{-ma}.
\end{equation}

Furthermore, 
$$
|u(z_{1})-u(z_{1 m })|\leq
\sum_{k=m}^{\infty}|u(z_{1,(k+1)})-u(z_{1k})|.
$$
Here $z _{1,(k+1)},z _{1k}\in \bZ^{r}_{k+1}(c)$ and
 $|z^{i}_{1,(k+1)}-z^{i}_{1k}|\leq
(c_{i}-1)c_{i}^{-(k+1)}$ for any $i$. It follows that 
$$
|u(z_{1,(k+1)})-u(z_{1k})|
\leq r\max c_{i}2^{-(k+1)\alpha},
$$
$$
|u(z_{1})-u(z_{1m})|\leq N
 2^{-m\alpha }.
$$
Combined with the similar estimate
for $z_{2}$ and \eqref{3.5.1} this
leads to
\begin{equation}
                       \label{3.5.2}
|u(z_{1 })-u(z_{2 }) |\leq N
 2^{-m\alpha }.
\end{equation}
 
We proved this estimate assuming
\eqref{3.5.3} and that $m\geq n$. Now take $z_{1},z_{2}\in \bigcup_{m}\bZ^{r} _{m}$
such that \eqref{3.5.5} holds and define
$$
m=\min_{i}\lfloor \ln (1/|z^{i}_{1}-z^{i}_{2}|) /
\ln c_{i}\rfloor.
$$
Then it is not hard to see that $m\geq n$ and \eqref{3.5.3} holds.

After that it only remains to observe that
$$
2^{-m\alpha}\leq 2\max_{i}2^{\alpha\ln |z^{i}_{1}-z^{i}_{2}|/\ln c_{i}}
=2\max_{i}|z^{i}_{1}-z^{i}_{2}|^{\alpha\ln 2/\ln c_{i}}.
$$
The lemma is proved.  \qed

In Lemma \ref{lemma 3.5.1} 
set $r=d+1$, $c_{1}=4,c_{2}=...=c_{d+1}=2$ and, accordingly, introduce
$\bZ^{d+1}_{n}(c)$, $\bZ^{d+1}_{\infty}(c)$. Also set $\bZ^{d}_{m}(2) =\bZ^{d}_{m}(2,...,2)$ with 2 repeated $d$-times. Recall that $\kappa>(d+2)/2$.
\begin{lemma}
                                  \label{lemma 3.5.2}
Let a   random field $u(t,x)$ be defined on $\bZ^{d+1}_{\infty}(c)$.
Assume that there exist constants $ \gamma\geq2\kappa $, $K<\infty$
such that  for $t,s\in
\bZ^{1}_{\infty}(4),x\in\bZ^{d}_{\infty}(2)$
$$
E|u(t,x)-u(s,x)|^{\gamma}\leq 
K^{\gamma} |t-s |^{\gamma/2}, 
$$
$$
E\sup_{x,y\in \bZ^{d}_{\infty}(2)}\frac{|u(t,x)-u(t,y)|^{2\kappa}}
{|x-y|^{2\kappa-d}}\leq K^{2\kappa}.
$$
 Then, for every $0<\alpha<1-(d+2)/(2\kappa)$  
with probability one there exists
a continuous extension of $u$ on
$[0,1]^{d+1}$, called again $u$, and
an integer-valued $n=n(\omega,\alpha,\gamma,\kappa,d)$
such that for any $(t,x),(s,y)\in
[0,1]^{d+1}$ satisfying $|t-s|\leq 2^{-n}$ and $\|x-y\|\leq 2^{-n}$, where
$\|x-y\|=\max_{i}|x^{i}-y^{i}|$, we have
\begin{equation}
                   \label{3.3.5}
|u(t,x)-u(s,y)|\leq N(\alpha, d)K
(|t-s|^{\alpha/2}+|x-y|^{\alpha}).
\end{equation}
 
\end{lemma}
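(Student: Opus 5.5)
The plan is to reduce everything to Lemma \ref{lemma 3.5.1} with $r=d+1$, $c_{1}=4$, $c_{2}=\dots=c_{d+1}=2$. The time coordinate lives on the lattice with step $4^{-m}$ and the space coordinates on the lattice with step $2^{-m}$. With this choice a time step $4^{-m}$ corresponds to a spatial step $2^{-m}$, which is parabolic scaling. So it suffices to produce, for a fixed $\alpha<1-(d+2)/(2\kappa)$ and almost every $\omega$, an $n(\omega)$ such that for all $m\geq n$ neighboring lattice points of $\bZ^{d+1}_{m}(c)$ satisfy $|u(z_{1})-u(z_{2})|\leq K2^{-m\alpha}$. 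Lemma \ref{lemma 3.5.1}, applied to $u/K$, then gives
\[
|u(z_{1})-u(z_{2})|\le NK\max\bigl(|t-s|^{\alpha\ln2/\ln4},\ \max_i|x^i-y^i|^{\alpha}\bigr),
\]
which is \eqref{3.3.5}, since $\ln 2/\ln 4=1/2$. The same lemma also gives the continuous extension to $[0,1]^{d+1}$.

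\emph{Spatial neighbors.} These are handled pathwise by the second hypothesis. Set
\[
M:=\sup_{x,y}|u(t,x)-u(t,y)|\,|x-y|^{-(2\kappa-d)/(2\kappa)} .
\]
For spatial neighbors, $|x-y|=2^{-m}$, so $|u(t,x)-u(t,y)|\leq M_{t}2^{-m(1-d/(2\kappa))}$. The difficulty is that the sup in the hypothesis is over $x,y$ for fixed $t$, while we need uniformity over the $4^{m}+1$ times in $\bZ^{1}_{m}(4)$. I would use Chebyshev together with a union bound:
\[
P\Big(\max_{t\in\bZ^{1}_{m}(4)}M_{t}>K2^{m\beta}\Big)\leq (4^{m}+1)2^{-2\kappa m\beta}.
\]
Choose $\beta=1-d/(2\kappa)-\alpha$. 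Then $2\kappa\beta>2$ holds precisely because $\alpha<1-(d+2)/(2\kappa)$, so the right-hand side is summable in $m$. Borel--Cantelli then gives, for $m\geq n(\omega)$, that spatial neighbor differences are at most $K2^{-m\alpha}$. This is where the threshold $1-(d+2)/(2\kappa)$ comes from.

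\emph{Temporal neighbors.} Here $|t-s|=4^{-m}$, and the first hypothesis gives $E|u(t,x)-u(s,x)|^{\gamma}\leq K^{\gamma}2^{-m\gamma}$. A union bound over the roughly $4^{m}2^{md}$ temporal neighbor pairs gives
\[
P\big(\text{some pair differs by }>K2^{-m\alpha}\big)\leq N2^{m(2+d)}2^{-m\gamma(1-\alpha)} .
\]
This is summable, since $\gamma(1-\alpha)\geq2\kappa(1-\alpha)>2\kappa\cdot(d+2)/(2\kappa)=d+2$. Borel--Cantelli again controls these pairs for $m\geq n(\omega)$; take $n(\omega)$ to be the maximum of the two indices.

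\emph{Conclusion.} With $n(\omega)$ in hand, the hypothesis of Lemma \ref{lemma 3.5.1} holds for the normalized field $u/K$, with $N$ absorbing the constants $2r$ and the like. The lemma yields the estimate whenever $|t-s|\leq 4^{-n-1}$ and $\|x-y\|\leq 2^{-n-1}$. After shifting $n$, this covers the condition $|t-s|\leq 2^{-n}$, $\|x-y\|\leq 2^{-n}$, which is a slightly weaker restriction on time and is absorbed by enlarging $n$. Finally, $\|x-y\|\le|x-y|$ turns the max-norm into the Euclidean one.

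I expect the main obstacle to be the spatial step: correctly exploiting the sup-type hypothesis, which is uniform only in $x$, together with the union bound over time points, and checking that the exponents close exactly at the stated range of $\alpha$.
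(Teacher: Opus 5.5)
Your proposal is correct and follows essentially the same route as the paper. In both, the temporal neighbours are controlled by Chebyshev with the $\gamma$-th moment plus a union bound over the $\le N4^{m}2^{md}$ pairs, and the spatial neighbours by Chebyshev on the sup-quantity plus a union bound over the $4^{m}+1$ time points, which is exactly where $2\kappa(1-\alpha)>d+2$ enters. The argument then closes with Borel--Cantelli and Lemma \ref{lemma 3.5.1} with $c_{1}=4$, $c_{2}=\dots=c_{d+1}=2$. Your explicit re-indexing of $n$, passing from the conditions $|t-s|\le 4^{-n-1}$, $\|x-y\|\le 2^{-n-1}$ to the stated $2^{-n}$ conditions, is a step the paper glosses over, and you handle it correctly.
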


Proof. Obviously, we may assume that $K=1$ and we only need to concentrate
on $(t,x),(s,y)\in
\bZ^{d+1}_{\infty}(c)$. Then  for   any integer $n\geq0$ 
introduce the event $A_{n} $
consisting of $\omega$
such that   

(i)
for any $m\geq n$, $(t,x),(s,x)
\in \bZ^{d+1}_{m}(c)$,   for which
$ |t-s|\leq 4^{-m}$,
 we have $|u(t,x)-u(s,x)|\leq 2^{-m \alpha}$, and

 (ii) for any $m\geq n$, $(t,x),(t,y)
\in \bZ^{d+1}_{m}(c)$,   for which
$ \|x-y\|\leq 2^{-m}$,
 we have $|u(t,x)-u(t,y)|\leq 2^{-m \alpha}$.

According to Lemma \ref{lemma 3.5.1} 
on $A_{n} $ we have
$$
 |u(t,x)-u(s,y) |\leq
N(|t-s|^{\alpha/2}+|x-y|^{\alpha}),
$$
where $N$ depends only on $d $, and $\alpha$,
as long as $(t,x),(s,y)\in
[0,1]^{d+1}$ and $|t-s|+\|x-y\|\leq 2^{-n-1}$. Therefore, to prove the lemma
it suffices to show that
\begin{equation}
                          \label{3.6.1}
\sum_{n}P(A_{n}^{c})<\infty.
\end{equation}

The complement $A^{c}_{n} $ of $A_{n} $
is the union of events such that either

(i) there are $m\geq n$,
 $x\in \bZ^{d}_{m}(2)$ $(=\bZ^{d}(2,...,2)$) and    $t,s\in \bZ^{1}_{m}(4)$ for which
$ |t-s|\leq 4^{-m}$ 
and $|u(t,x)-u(s,x)|^{\beta}\geq 2^{-m\alpha \beta}$, or

(ii) there are $m\geq n$, $t\in \bZ^{1}_{m}(c_{1})$ and   $x,y\in \bZ^{d}_{m}(2)$ for which
$ \|x-y\|\leq 2^{-m}$ and
 $|u(t,x)-u(t,y)|^{2\kappa}\geq 2^{-m 2\kappa \alpha}$.

   For
each $m\geq n$ the number of events in group (i) is not greater than the
number of points in $\bZ^{d}_{m}(2)$
times the number of couples
$t,s\in \bZ^{1}_{m}(4)$ for which
$ |t-s|\leq 4^{-m}$, so the total number is less than $2(4^{m}+1)(2^{m}+1)^{d}\leq N4^{m}2^{md}$. By Chebyshev's inequality 
the probability
of each event in group (i)
with fixed $m\geq n$ is less than
$N2^{m\alpha \gamma}4^{-m \gamma/2}$.
Therefore, the probability of event (i) is
less than
$$
N\sum_{m\geq n}2^{m(d+2+\alpha \gamma-\gamma)} =N2^{n(d+2+\alpha \gamma-\gamma)},
$$
where the equality follows from the fact that $d+2+\alpha \gamma-\gamma
<d+2-\gamma (d+2)/2\kappa)\leq0$.
The sum over $n$ of these estimates is finite.

For
each $m\geq n$ and $t\in \bZ^{1}_{m}( 4)$ the event that there
are $x,y\in \bZ^{d}_{m}(2)$ for which
$ \|x-y\|\leq 2^{-m}$ and
 $|u(t,x)-u(t,y)|^{2\kappa}\geq 2^{-m 2\kappa \alpha}$ is a subset of the event that
$$
 \sup_{x,y\in [0,1]^{d}}\frac{|u(t,x)-u(t,y)|^{2\kappa}}
{|x-y|^{2\kappa-d}}\geq c
2^{-m 2\kappa \alpha +m(2\kappa-d)}
=c2^{m2\kappa(1-\alpha)-md},
$$
where $c=c(d,\kappa)>0$.
Therefore, for each $m\geq n$ the probability of the part of event (ii)
corresponding to $m$
is less than
$N2^{2m-m2\kappa(1-\alpha)+md}$,
where $2+d+2\kappa(\alpha-1)<0$.
This implies that the probability of event (ii) is less than $N2^{2n-n2\kappa(1-\alpha)+nd}$ and along with
the above analysis of event (i) yields
\eqref{3.6.1}. The lemma is proved.

 {\bf Proof of Theorem \ref{theorem 3.7.1}}. As  it is known since \cite{BF_61}   (1961)
(see also \cite{Ku_90} 1990), owing to the smoothness of $\sigma,b$, 
one can define $x_{s}(t,x)$ in such a way
that it becomes differentiable in $x$ for all $(\omega,t,s)$
and the derivative $\eta^{i}D_{i}x_{s}(t,x)$ of $x_{s}(t,x)$ 
in the direction of $\eta$
satisfies the same equation as $\xi_{s}(t,x,\eta)$ from Lemma \ref{lemma 6.21.1},
for which \eqref{6.21.5} holds.
Hence, for any $(t,x)$ with probability one 
$\xi_{s}(t,x,\eta)=\eta^{i}D_{i}x_{s}(t,x)$
for all $s\geq0$.

It follows that if $f(x)$ is smooth with compact support and 
$$
v(t,x,\eta,s):=  
E\Big|\big((Df)(x_{s}(t,x)),\xi_{s}(t,x,\eta)\big)\Big|^{2\kappa}
$$
 ($(\cdot,\cdot)$ is the scalar product in $\bR^{d}$), then   
by Theorem \ref{theorem 6.21.1}, with $n=1$ there, 
 for $|t|,s\leq T$ we have
$$
\int_{\bR^{ d}}e^{-|x|} \sup_{|\eta|\leq1}v^{2}(t,x,\eta,s) \,dx  
$$
\begin{equation}
                                                        \label{7.2.4}
\leq 
N\int_{\bR^{ d}}e^{-|x|} \sup_{|\eta|\leq1} |f_{(\eta)}(x)| ^{4\kappa }\,dx 
\leq N  \int_{\bR^{ d}}e^{-|x|}|Df (x)|^{4\kappa }\,dx ,
\end{equation}
where (and below) the  constants $N$, 
depend  only on  $d$, $\delta$, $p_{0}, \rho_{0}$,  $\kappa$, $T$.
Next,  
$$
 E\int_{\bR^{d}}e^{-|x|}|D\big(f(x_{s}(t,x))\big)|^{2\kappa }\,dx 
\leq N E\int_{\bR^{d}\times B_{1}}e^{-|x|}| \big(f(x_{s}(t,x))\big)_{(\eta)}|^{2\kappa }\,dxd\eta
$$
$$
= N\int_{\bR^{d}\times B_{1}}e^{-|x|}
v (t,x,\eta,s) \,dx  d\eta.
$$

By using \eqref{7.2.4} and H\"older's inequality we obtain that
$$
 E\int_{\bR^{d}}e^{-|x|}|D\big(f(x_{t}(x))\big)|^{2\kappa }\,dx \leq
N  \Big(\int_{\bR^{ d}}e^{-|x|}|Df (x)|^{4\kappa }\,dx\Big)^{1/2} .
$$

We obtained this estimate for smooth $f$
with compact support. By using Fatou's lemma it is extended to all smooth functions. Clearly, one can also take
$\bR^{d}$-valued $f$'s. For $f(x)\equiv x$ we get
\begin{equation}
                       \label{2.8.5}
 E\int_{\bR^{d}}e^{-|x|}|D (x_{s}(t,x) \big)|^{2\kappa }\,dx \leq
N  \Big(\int_{\bR^{ d}}e^{-|x|} \,dx\Big)^{1/2}=:M,
\end{equation}
which proves \eqref{3.7.6}.

By Morrey's theorem (see, for instance, Theorem 
10.2.1 of \cite{Kr_08}) this implies that ($\kappa >d/2$)
\begin{equation}
                                                        \label{3.7.2}
E \sup_{x,y\in [0,1]^{d}}\frac{| x_{s}(t,x)  - x_{s}(t,y)  |^{2\kappa }
}{|x-y|^{2\kappa -d}} \leq
N  M  .
\end{equation} 
Furthermore, owing to Theorem 3.17 of \cite{Kr_24}
for any $\gamma>0$, $s_{1},s_{2}\leq 1$, 
\begin{equation}
                      \label{3.7.1}
E| x_{s_{1}}(t,x)  - x_{s_{2}}(t,x)  |^{\gamma}
\leq  N(d,\delta,\gamma,p_{0},\rho_{0})
 |s_{1}-s_{2} |^{\gamma/2}.
\end{equation}

Now the assertions of the theorem about
the H\"older continuity of $x_{s}(t,x)$ in $(s,x)$
for $(s,x)\in[0,1]^{d+1}$, $|t|\leq T$,
follow from Lemma \ref{lemma 3.5.2}.
Clearly, it extends over more general sets.
The theorem is proved.

\begin{remark}
                        \label{remark 1.31.6}
  Similarly to Remark \ref{remark 1.31.5}
the result of Theorem \ref{theorem 3.7.1}
is valid with obvious changes if
$$
\int_{\bR}\sup_{\bR^{d}}(|D\sigma|+|b|)^{2}(t,x)\,
dt<\infty.
$$
Indeed, in that case one can short cut the proof of Theorem \ref{theorem 6.21.1}
after \eqref{12.7,4}.

\end{remark}

{\bf Acknowledgments}. The author is thankful
to D. Kinzebulatov for several discussions and  
resulting corrections.

{\bf Declarations}. 
No funds, grants, or other support was received.
The author has no relevant financial or non-financial interests to disclose.
The manuscript contains no data.

\end{document}